\numberwithin{equation}{section}
\newtheorem{thm}{Theorem}[section]
\newtheorem{lem}[thm]{Lemma}
\newtheorem{cor}[thm]{Corollary}
\newtheorem{prop}[thm]{Proposition}
\newtheorem{Def}[thm]{Definition}
\newcommand{\A}{\mathcal{A}}
\newcommand{\B}{\mathcal{B}}
\newcommand{\C}{\mathbb{C}}
\newcommand{\N}{\mathbb{N}}
\newcommand{\R}{\mathbb{R}}
\newcommand\cc{\mathcal{C}}
\def \p{\partial}
\begin{document}

	\title[ Finsler $N$-Liouville Equation ]{Quantization of blow-up masses for the Finsler $N$-Liouville equation}
	
	\author[X.\ Huang]{Xia Huang}
	\author[Y.\ Li]{ Yuan Li}
\author[D.\ Ye]{ Dong Ye}
	\author[F.\ Zhou]{Feng Zhou}

	\address{School of Mathematical Sciences, East China Normal
Universit, Key Laboratory of MEA (Ministry of Education) and Shanghai Key Laboratory of PMMP, Shanghai, 200241, China}
	\email{ xhuang@cpde.ecnu.edu.cn}

	\address{School of Mathematical Sciences, East China Normal
Universit, Key Laboratory of MEA (Ministry of Education) and Shanghai Key Laboratory of PMMP, Shanghai, 200241, China}
\email{ liyuan5397@163.com}

	\address{School of Mathematical Sciences, East China Normal
Universit, Key Laboratory of MEA (Ministry of Education) and Shanghai Key Laboratory of PMMP, Shanghai, 200241, China}
\email{ dye@math.ecnu.edu.cn}

	\address{School of Mathematical Sciences, East China Normal
Universit, Key Laboratory of MEA (Ministry of Education) and Shanghai Key Laboratory of PMMP, Shanghai, 200241, China\\
NYU-ECNU Institute of Mathematical Sciences at NYU-Shanghai, Shanghai 200120, China}
\email{ fzhou@math.ecnu.edu.cn}

	\subjclass[2010]{35B44; 35J92}
	\keywords{Finsler $N$-Liouville equation, Blow up analysis, Quantization}

	\begin{abstract}
The quantization results for blow-up phenomena play crucial roles in the analysis of partial differential equations. Here we quantify the blow-up masses to the following Finsler $N$-Liouville equation $$-Q_{N}u_{n}=V_{n}e^{u_{n}}\quad\mbox{in}~ \Omega\subset \mathbb{R}^{N}, N \ge 2.$$
Our study generalizes the classical result of Li-Shafrir \cite{LS1994} for Liouville equation, Wang-Xia's work for anisotropic Liouville equation in $\R^2$ \cite{WX2012}, and Esposito-Lucia's for the $N$-Laplacian case in $\R^N$ ($N \geq 3$) in their recent paper \cite{EL2021}.
	\end{abstract}
	\maketitle

\section{Introduction}\label{sec1}
In this paper, we are interested in the following Finsler $N$-Liouville equation
\begin{align}\label{main eq1}
-Q_{N}u =V e^{u} \quad \mbox{in}\; \Omega,
\end{align}
with a bounded smooth domain $\Omega\subset\mathbb{R}^{N} (N\geq2)$, $V \in C(\Omega)$ and the operator $Q_{N}$ defined by
$$Q_{N}u:= {\rm div}\Big(F^{N-1}(\nabla u)F_{\xi}(\nabla u)\Big) = \sum_{i=1}^{N}\partial_{x_i}\Big(F^{N-1}(\nabla u)F_{\xi_{i}}(\nabla u)\Big).$$
Here $F_\xi = (F_{\xi_i})$, $F_{\xi_i} := \frac{\partial F}{\partial \xi_i}$, and $F$ is a convex function over $\mathbb{R}^N$ satisfying
\begin{itemize}
\item[--] $F \in C^{2}$ in $\mathbb{R}^{N}\backslash\{0\};$
\item[--] $F(t\xi)=|t|F(\xi)$ for any $t\in\mathbb{R}$, $\xi \in\mathbb{R}^{N};$
\item[--] there exists $0< C_1 \leq C_{2}<\infty$ such that $C_{1}|\xi|\leq F(\xi)\leq C_{2}|\xi|$ for any $\xi \in \mathbb{R}^{N}$.
\end{itemize}
In fact, $Q_{N}$ is called the Finsler $N$-Laplacian or anisotropic $N$-Laplace operator, and it has been studied vastly in the literature. Historically, Wulff \cite{W1901} used such operators to study crystal shapes and minimization of anisotropic surface tensions. For more background on anisotropic operators and Finsler geometry,
we refer the interested readers to \cite{AT1995, ATW1993, AFTL1997, CS2009, CFV2014, FM1991, SS2016} and references therein.

The most well known example of $F$ is
$$F(\xi)= |\xi|_q := \Big(\sum\limits_{i=1}^{N}|\xi_{i}|^q\Big)^{\frac{1}{q}} \;\; \mbox{ in}\; \mathbb{R}^N$$
with $1<q<\infty$. If moreover $q=2$, $F(\xi)= |\xi|$ is the Euclidean norm and $Q_{N}$ is the classical isotropic $N$-Laplacian, i.e.~${\rm div}\big(|\nabla u|^{N-2}\nabla u\big)$.  Especially, when $N=2$,  we arrive at
\begin{align}\label{eq2}
-\Delta u=V e^{u} \quad\mbox{in } \Omega \subset \mathbb{R}^{2}.
\end{align}
It's well known that equation (\ref{eq2}) has profound geometric and physical background. In differential geometry, \eqref{eq2} is called the prescribed Gauss curvature equation see \cite{KW1974,N1982,CN1991}. It is also linked to the mean field equation, the Chern-Simons model and so on, see \cite{BDM2015, BLT2011, FT2014} and references therein.

In particular, when $V\equiv1$, Liouville \cite{L1853} studied (\ref{eq2}) and gave the expression of all smooth solutions using holomorphic functions, which means that there is a rich family of solutions over $\C$. Moreover, Chen-Li \cite{CL} classified all solutions of  $-\Delta u = e^u$ in $\R^2$ if $e^{u}\in L^{1}(\mathbb{R}^{2})$, they showed that any solution is given by
$$u_*(x)=\ln\frac{8\lambda^{2}}{(1+\lambda^{2}|x-p_0|^{2})^{2}}, \quad \mbox{with } \lambda > 0, \, p_0\in\mathbb{R}^{2}.$$
Hence for any $p_0 \in \R^2$ and $\lambda > 0$,
$$\int_{\mathbb{R}^{2}}e^{u_*}dx=8\pi.$$

To understand the blow-up phenomena or the compactness of family of solutions of  \eqref{eq2} with various $V$, Brezis-Merle \cite{BM1991} showed a milestone compactness-concentration result:

\medskip
\noindent
{\bf Theorem A.} {\it Assume that $\{u_{n}\}$ is a sequence of solutions to
$$-\Delta u_{n}=V_{n} e^{u_{n}} \quad \mbox{in } \Omega\subset\mathbb{R}^{2},$$
such that
$$0\leq V_{n}(x)\leq M, \;\;\forall\; x\in\Omega; \quad \int_{\Omega}e^{u_{n}}dx\leq M$$
with some constant $M > 0$. Then up to a subsequence, we have one of the following three alternatives:
\begin{itemize}
\item[(i)] $\{u_{n}\}$ is bounded in $L^{\infty}_{loc}(\Omega)$;
\item[(ii)] $u_{n}\rightarrow-\infty$ uniformly on compact subsets of $\Omega$;
\item[(iii)] There exists a finite blow-up set $S=\{a_{1},\cdots,a_{m}\}\subset\Omega$ such that $u_{n}(x)\rightarrow-\infty$ uniformly on any compact subset of  $\, \Omega\backslash S$. Moreover, $V_{n}e^{u_{n}}\rightharpoonup\sum_{1 \le i \le m}\alpha_{i}\delta_{a_{i}}$ in the sense of measure with $\alpha_{i}\geq4\pi$, $1\leq i\leq m$.
\end{itemize}}

\noindent This result plays a crucial role in the study of equations like \eqref{eq2}, for the prescribing Gauss curvature problem or mean field equation: the existence issue, blow-up analysis, asymptotic behavior near isolated singularity, compactness and many other aspects of solutions. It has inspired so many important works, it is just impossible for us to cite some of them, seeing the hundreds of citations.

Brezis-Merle conjectured (see \cite[Open Problem 4]{BM1991}) that under some minor regularity assumptions on ${\{V_n}\}$, the concentration Dirac mass $\alpha_i$ appearing in alternative $(iii)$ should belong to $8\pi\N$, in other words the multiple of bubble energy classified by Chen-Li. Li-Shafrir \cite{LS1994} gave an affirmative answer to this conjecture, assuming that $V_{n}\rightarrow V$ in $C(\overline{\Omega})$. Furthermore, if $\{V_{n}\}$ are uniformly Lipschitz functions, i.e.
\begin{align}\label{VC}
V_{n}\geq0, \quad V_{n}\rightarrow V \mbox{ in }C(\overline{\Omega}) \quad \mbox{and} \quad\|\nabla V_{n}\|_{L^{\infty}(\Omega)}\leq M,
\end{align}
and if
\begin{align}\label{+1}
\sup\limits_{\partial\Omega}u_{n}-\inf\limits_{\partial\Omega}u_{n}\leq M,
\end{align}
for some positive constant $M$, Li \cite{L1999} proved that $\alpha_i=8\pi$ for each $i$.

\medskip
Recently, the above studies have been generalized to the $N$-Laplacian case, that is, for
\begin{align}\label{main eq3}
-\Delta_{N}u=V e^{u} \quad\mbox{in } \; \Omega \subset \mathbb{R}^{N}, \; N \geq 2.
\end{align}
Firstly, assuming that $V=1$, Esposito \cite{E2018} classified that all solutions to equation (\ref{main eq3}) with $e^u \in L^1(\R^N)$ are as follows:
$$u(x)=\ln\frac{\mathcal{C}_{N}\lambda^{N}}{\big(1+\lambda^{\frac{N}{N-1}}|x-p_0|^{\frac{N}{N-1}}\big)^{N}} \quad\mbox{and}\quad \int_{\mathbb{R}^{N}}e^udx=\mathcal{C}_{N}\omega_{N},$$
where $p_0\in\mathbb{R}^{N}$, $\lambda > 0$, $\mathcal{C}_{N}=N\left(\frac{N^{2}}{N-1}\right)^{N-1}$ and $\omega_{N}$ stands for the volume of the unit Euclidean ball in $\mathbb{R}^{N}$. Moreover, Brezis-Merle's trichotomy result, the corresponding Li-Shafrir or Li type blow-up mass quantification have been extended to \eqref{main eq3} respectively in \cite{AP1997, EL2021, EM2015}.

\medskip
A natural question is to consider more general anisotropic case. To describe precisely the result for anisotropic situation, we need to introduce the Wulff ball and precise also the meaning of solution here. Let $F^{0}$ be the support function of $K:=\{x\in\mathbb{R}^{N}\mid ~F(x)<1\}$, that is
$$F^{0}(x):=\sup_{\xi\in K} \langle x,\xi\rangle,$$
where $\langle \cdot,\cdot\rangle$ denotes the inner product in $\mathbb{R}^{N}$. $F^{0}$ is indeed the dual norm of $F$, and $F^0$ is a convex function with first-order positive homogeneity. For more properties of $F^{0}$, we refer to \cite{WX2012,XG2016} and references therein. We denote by $\mathcal{B}_{r}(p):=\{x\in\mathbb{R}^{N}: F^{0}(x-p)<r\}$ the Wulff ball with respect to $F^0$, of radius $r$ centered at $p \in \mathbb{R}^N$. We write $\mathcal{B}_{r}$ when the center is the origin.
\begin{Def}
We say that $u$ is a weak solution of (\ref{main eq1}), if $u\in W^{1,N}_{loc}(\Omega)$, $Ve^{u}\in L^{1}_{loc}(\Omega)$ and
$$\int_{\Omega}F^{N-1}(\nabla u)F_{\xi}(\nabla u)\cdot\nabla\phi dx=\int_{\Omega}Ve^{u}\phi dx, \quad \forall\; \phi\in W_{c}^{1, N}(\Omega)\cap L^\infty(\Omega).$$
\end{Def}

To the best of our knowledge, there are  few works considering the blow-up analysis of genral anisotropic Liouville equations. Wang-Xia \cite{WX2012} first proved the concentrated compactness property of  solutions to (\ref{main eq1}) when $N=2$, and extended the results of \cite{BM1991, L1999} to the anisotropic Liouville equation in $\R^2$. Later on, Xie-Gong \cite{XG2016} considered Brezis-Merle's trichotomy result for \eqref{main eq1}, and proved

\medskip
\noindent
{\bf Theorem B.}
{\it Let $\Omega$ be a bounded smooth domain and $u_{n}$ be a sequence of weak solutions to
\begin{align}\label{main eq4}
-Q_{N}u_{n} =V_{n}e^{u_{n}} \quad \mbox{in} \; \Omega\subset\mathbb{R}^{N},
\end{align}
 with
\begin{align}\label{Vn}
0\leq V_{n}\leq M \quad\mbox{and}\quad \int_{\Omega}e^{u_{n}}dx\leq M .
\end{align}
Then up to a subsequence, one of the following alternatives holds true:
\begin{itemize}
\item[(i)] $\{u_{n}\}$ is bounded in $L^{\infty}_{loc}(\Omega)$;
\item[(ii)] $u_{n}\rightarrow-\infty$ uniformly on compact subsets of $\Omega$;
\item[(iii)] There is a nonempty finite set $S=\{a_{1},\cdots,a_m\}\subset\Omega$, and $u_{n}(x)\rightarrow-\infty$ uniformly on any compact subset of $\Omega\backslash S$. In addition, $V_{n}e^{u_{n}}\rightharpoonup\sum_{1\le i \le m}\alpha_{i}\delta_{a_{i}}$ in the sense of measure on $\Omega$ with $\alpha_{i}\geq N^{N}\kappa$ for any $i$, where $\kappa:=|\mathcal{B}_1|$.
\end{itemize}
Moreover, if (\ref{VC}) and (\ref{+1}) are satisfied, then $\alpha_i=\mathcal{C}_{N}\kappa$ in $(iii)$ with
$\mathcal{C}_{N} = N\Big(\frac{N^2}{N-1}\Big)^{N-1}$.}

\medskip
Now we can ask whether the quantification of concentration mass $\alpha_i$ holds true in more general setting of \eqref{main eq4}, analogous to Li-Shafrir's result on Brezis-Merle's conjecture. Our main objective here is to give an affirmative answer.

\begin{thm}\label{thm1}
Let $\{u_{n}\}$ be a sequence of weak solution to (\ref{main eq4}) with $V_{n}\geq0$, $V_{n}\rightarrow V$ in $C(\overline{\Omega})$ and
\begin{align}\label{energy}
\int_{\Omega}e^{u_{n}}dx\leq M < \infty\quad\mbox{ for any } n.
\end{align}
Assume that the alternative $(iii)$ happens for $u_n$, then for any $1\le i \le m$, $\alpha_i \in \mathcal{C}_{N}\kappa\N$.

Furthermore, $\alpha_1 = {C}_{N}\kappa$ if there is $R > 0$ such that $\mathcal{B}_R(a_1) \subset \Omega$ and
\begin{align}\label{th6}
u_{n}(x)+N\ln F^{0}(x-a_1)\leq M', \quad \mbox{ for any } n \in \N, x\in\mathcal{B}_{R}(a_1).
\end{align}
\end{thm}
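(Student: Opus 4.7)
The plan is to adapt the bubble-extraction strategy of Esposito--Lucia \cite{EL2021} to the anisotropic Finsler setting, combined with an anisotropic Pohozaev identity for the neck-region analysis. By Theorem B I localize to a Wulff ball $\mathcal{B}_{R_0}(a_1) \subset \Omega$ containing only the blow-up point $a_1$, with $\int_{\mathcal{B}_{R_0}(a_1)} V_n e^{u_n}\,dx \to \alpha_1$. To extract the first bubble, let $x_n^{(1)} \in \overline{\mathcal{B}_{R_0/2}(a_1)}$ maximize $u_n$, set $\tau_n := u_n(x_n^{(1)}) \to +\infty$, $\varepsilon_n^{(1)} := e^{-\tau_n/N}$, and rescale $v_n(y) := u_n(x_n^{(1)} + \varepsilon_n^{(1)} y) + N \ln \varepsilon_n^{(1)}$. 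Then $v_n$ solves $-Q_N v_n = \widetilde V_n e^{v_n}$ with $\widetilde V_n(y) = V_n(x_n^{(1)} + \varepsilon_n^{(1)} y) \to V(a_1)$ locally uniformly, and $v_n(0) = 0 = \max v_n$. The $C^{1,\alpha}$ regularity theory for the Finsler $N$-Laplacian together with the Wulff-symmetric classification of entire finite-mass solutions of $-Q_N u = e^u$ on $\R^N$ (the anisotropic analogue of Esposito's theorem \cite{E2018}) yields $v_n \to v^*$ in $C^1_{loc}(\R^N)$ with $\int_{\R^N} V(a_1) e^{v^*}\,dx = \mathcal{C}_N \kappa$.

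If the residual mass is positive I iterate the selection: at step $j$, choose $x_n^{(j)}$ to be a local maximum of $u_n$ outside a sufficiently $\varepsilon_n^{(j-1)}$-large neighborhood of the previously treated centers, and rescale by $\varepsilon_n^{(j)} := e^{-u_n(x_n^{(j)})/N}$. Each step captures a mass exactly $\mathcal{C}_N \kappa$, and since $\alpha_1 \le M$ the procedure stops after some integer $k \ge 1$ steps. The crucial technical point is that no mass escapes into the annular ``neck'' regions between consecutive bubble scales. For this I apply the anisotropic Pohozaev identity obtained by testing \eqref{main eq4} against $F^{N-1}(\nabla u_n) F_\xi(\nabla u_n) \cdot (x - x_n^{(j)})$ over a Wulff annulus $\mathcal{B}_{\rho}(x_n^{(j)}) \setminus \mathcal{B}_{r}(x_n^{(j)})$ with $\varepsilon_n^{(j)} \ll r \ll \rho \ll \varepsilon_n^{(j-1)}$. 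Since $u_n \to -\infty$ locally uniformly off $S$, combined with the anisotropic $L^\infty$ estimates of \cite{XG2016}, the outer boundary flux is $o(1)$ while the inner one captures precisely $\mathcal{C}_N \kappa$; the annular mass is therefore $o(1)$ and so $\alpha_1 = k \mathcal{C}_N \kappa$.

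For the second assertion, assume \eqref{th6}. Evaluating the bound at each extracted center $x_n^{(j)}$ gives $F^0(x_n^{(j)} - a_1) \le e^{M'/N} \varepsilon_n^{(j)}$. Suppose $k \ge 2$ and relabel so that $\varepsilon_n^{(1)} \le \varepsilon_n^{(2)}$; then the vectors $b_n := (x_n^{(1)} - x_n^{(2)})/\varepsilon_n^{(2)}$ remain bounded, yet in the variables rescaled around $x_n^{(2)}$ at scale $\varepsilon_n^{(2)}$ the first bubble appears as an additional concentration at the bounded point $\lim b_n$ with scale $\varepsilon_n^{(1)}/\varepsilon_n^{(2)} \to 0$. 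The rescaled sequence would therefore converge to a smooth standard bubble away from this extra point while carrying an additional Dirac mass $\mathcal{C}_N \kappa$ there, in contradiction with the anisotropic classification. Hence $k = 1$ and $\alpha_1 = \mathcal{C}_N \kappa$.

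The main obstacle will be the Pohozaev/neck step: the nonlinear anisotropic structure of $Q_N$ makes the Pohozaev--Rellich identity over Wulff spheres less transparent than in the isotropic case, and one first needs a ``fast decay'' control of the form $u_n(x) + N \ln F^0(x - x_n^{(j)}) \le C$ in the neck annuli in order to bound the boundary flux terms involving $F^{N-1}(\nabla u_n) F_\xi(\nabla u_n) \cdot \nu$. The anisotropic Harnack inequality, capacity estimates, and $C^{1,\alpha}$ regularity developed in \cite{WX2012,XG2016} should provide the required ingredients.
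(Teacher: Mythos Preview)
Your bubble-extraction outline is essentially the same as the paper's, but the neck-region analysis is where your proposal diverges, and this is also where the real content lies. The paper does \emph{not} use any Pohozaev--Rellich identity. Instead it proves two a priori estimates: a $\sup+\inf$ inequality of the form $\max_{\Sigma} u_n + C_1 \inf_{\Omega} u_n \le C_2$ for any $C_1>N-1$ (Lemma~\ref{lem3.1}, itself proved by blow-up and the Ciraolo--Li classification), and a Harnack inequality on Wulff annuli (Lemma~\ref{lem3.3}, coming from Serrin's local estimate). Combining these two on an annulus $\mathcal{A}_{r_n,\,R_1/4}(p_n)$ with $r_n=k_n\delta_n$ yields a pointwise decay
\[
e^{u_n(x)} \le C\,\delta_n^{N\alpha/C_1}\,F^0(x-p_n)^{-N(1+\alpha/C_1)},
\]
and direct integration gives that the neck mass is $O(k_n^{-N\alpha/C_1})\to 0$. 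The multi-bubble case is then handled by a purely combinatorial induction on the number of bubble centers (Lemma~\ref{lem3.2}), rescaling by the minimal mutual distance $d_n$ between centers and reapplying the single-bubble estimate.

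Your proposed route via a Pohozaev identity with test vector field $F^{N-1}(\nabla u_n)F_\xi(\nabla u_n)\cdot(x-x_n^{(j)})$ is not a recognized identity for $Q_N$: that expression is the flux $F^{N-1}F_\xi$ dotted with the position field, not the usual dilation test $\langle x-x_n^{(j)},\nabla u_n\rangle$, and even the latter does not close cleanly for the anisotropic $N$-Laplacian over Wulff spheres without exactly the kind of sharp boundary control that the $\sup+\inf$ and annular Harnack estimates already provide. You yourself flag this as ``the main obstacle,'' and it is: as written, the proposal has no mechanism to show the neck mass vanishes. The paper's point is precisely that the $\sup+\inf$ inequality replaces Pohozaev entirely.

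For the second assertion your contradiction argument is also shaky. If $\varepsilon_n^{(1)}\le\varepsilon_n^{(2)}$ then rescaling around $x_n^{(2)}$ at scale $\varepsilon_n^{(2)}$ gives a sequence that is \emph{not} bounded above near $b_n$ (since $x_n^{(1)}$ carries the larger maximum), so convergence to a smooth entire bubble is not guaranteed and there is nothing for the extra Dirac mass to contradict. The paper instead argues directly: \eqref{th6} forces $F^0(p_n)\le C\delta_n$, and a short triangle-inequality computation shows this is equivalent to the single-center bound $u_n(x)+N\ln F^0(x-p_n)\le C$, which feeds straight into the $m=1$ neck estimate already established.
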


The proof is mainly based on the blow-up analysis, and the recent classification result of Ciraolo-Li \cite{CL2023} for the anisotropic $N$-Liouville equations under the integrability assumption. More precisely, they established that any solution to
\begin{align}
\label{bubble}
-Q_{N}u=e^u \;\;\mbox{in } \mathbb{R}^{N},\quad \int_{\mathbb{R}^{N}}e^{u}dx<\infty,
\end{align}
satisfies
\begin{align}
\label{bubble1}
u(x)=\ln\frac{\mathcal{C}_{N}\lambda^{N}}{\Big(1+\lambda^{\frac{N}{N-1}} F^{0}(x-p)^{\frac{N}{N-1}}\Big)^{N}} \quad\mbox{and}\quad \int_{\mathbb{R}^{N}}e^{u}dx=\mathcal{C}_{N}\kappa,
\end{align}
where $\lambda > 0$ and $p\in\mathbb{R}^{N}$. In the anisotropic case, we say that a function is {\it radial} with respect to $p \in \R^N$ if it depends only on $F^{0}(x-p)$.

\medskip
Our proof consists mainly of three steps. First, by blow-up around a local maximum of the function $u_n$, we are led to a solution of \eqref{bubble}, characterized in \cite{CL2023}. Then we will iterate this blow-up procedure around other eventual local maxima, and obtain each time a contribution of $\mathcal{C}_N \kappa$ to the limiting concentration mass. According to \eqref{energy}, the procedure will stop after a finite number of iterations. The last ingredient consists of showing that there is no mass contribution out of the bubbles, or the so called ``neck region".

Our paper is organized as follows. Section \ref{sec3} is devoted to establish some {\it a priori} estimates such as {\bf$\sup+\inf$} type inequality and Harnack type inequality, which are essential for our analysis. In section \ref{sec4}, we apply the blow up argument to prove the quantization result.

\section{A priori estimates}\label{sec3}
First of all, by regularity theory for anisotropic operators (see \cite{D1983, S1964} or \cite[Proposition 2.2]{CL2023}), we know that any solution to \eqref{main eq1} belongs to $C^{1, \beta}_{loc}(\Omega)$.

Here we expose some crucial {\it a priori} estimates for the bow-up analysis. A first one is the following Harnack type inequality for anisotropic equation, in the spirit of \cite[Theorem 3.1]{KM1992}.

\begin{prop}\label{pro2.1}
Let $p \in \R^N$, $r > 0$ and $u$ be a continuous and weak solution of
$$-Q_{N}u=f\geq0 \quad\mbox{in}\quad \mathcal{B}_{2r}(p)\subset\mathbb{R}^{N}.$$
Then
\begin{align}\label{estimate}
u(p)-\inf_{\mathcal{B}_{2r}(p)}u(x)\geq C\int_{0}^{r}\Big(\int_{\mathcal{B}_{t}(p)}fdx\Big)^{\frac{1}{N-1}}\frac{dt}{t},
\end{align}
where $C$ is a positive constant depending on $N$ and $F$.
\end{prop}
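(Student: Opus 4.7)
The plan is to adapt the Wolff-potential lower bound of Kilpel\"ainen--Mal\'y \cite{KM1992}, originally proved for the $p$-Laplacian on Euclidean balls, to the Finsler $N$-Laplacian on Wulff balls. After translating so that $p = 0$ and subtracting the constant $\inf_{\mathcal{B}_{2r}} u$ (permissible since $Q_N$ annihilates constants), the inequality \eqref{estimate} becomes the pointwise Wolff bound
$$u(0) \geq c\int_0^r \Big(\int_{\mathcal{B}_t} f\, dx\Big)^{\frac{1}{N-1}}\frac{dt}{t}$$
for any nonnegative continuous supersolution $u$ of $-Q_N u = f \geq 0$ on $\mathcal{B}_{2r}$. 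Elliptic regularity \cite{CL2023, D1983, S1964} guarantees $u \in C^1$, so $u(0)$ is well defined.

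The structural properties of $Q_N$ that permit the adaptation are: (i) the scaling $Q_N(u(\lambda \cdot))(x) = \lambda^N (Q_N u)(\lambda x)$, from the $1$-homogeneity of $F$ and $0$-homogeneity of $F_\xi$; (ii) the weak comparison principle, from the strict convexity of $F^N$ (see \cite{AFTL1997}); and (iii) the Euler identity $F^{N-1}(\nabla u) F_\xi(\nabla u) \cdot \nabla u = F^N(\nabla u)$, which follows from $F_\xi(\xi) \cdot \xi = F(\xi)$ and plays in the anisotropic setting the role of $|\nabla u|^{N-2}\nabla u \cdot \nabla u = |\nabla u|^N$. Combined with the norm equivalence $C_1 |\xi| \leq F(\xi) \leq C_2 |\xi|$, these suffice for the anisotropic versions of the Sobolev, Poincar\'e and Moser-iteration estimates needed in \cite{KM1992} to transfer, with $F^0$ in the role of $|\cdot|$ and Wulff balls in the role of Euclidean balls.

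I would then carry out the dyadic iteration of \cite{KM1992}: set $t_j := 2^{1-j} r$ and $a_j := \inf_{\mathcal{B}_{t_j}} u$, and estimate scale-by-scale the contribution of each dyadic annulus to $u(0) = \lim_j a_j$ (the limit being valid by continuity). The key ingredient is a truncation energy estimate obtained by testing the equation against $\phi = \min\bigl((k - u + a_j)_+, \ell\bigr)\eta^N$, where $\eta$ is an $F^0$-radial cut-off equal to $1$ on $\mathcal{B}_{t_{j+1}}$ and supported in $\mathcal{B}_{t_j}$. A suitable calibration of the parameters $k$ and $\ell$, as in \cite{KM1992}, produces a per-scale lower bound whose sum over $j$, compared with $\int_0^r (\int_{\mathcal{B}_t} f)^{1/(N-1)} \frac{dt}{t}$ via the monotonicity of $t \mapsto \int_{\mathcal{B}_t} f$, closes the proof.

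The main obstacle lies in carrying out this truncation-energy estimate in the anisotropic setting, since the nonlinear term $F^{N-1}(\nabla u) F_\xi(\nabla u)$ must be paired with $F^0$-radial cut-offs through the duality relation $F(\nabla F^0) \equiv 1$ between $F$ and $F^0$ (as used in \cite{WX2012, XG2016}). Once this technical adaptation is in place, the iteration proceeds essentially as in \cite{KM1992}, giving \eqref{estimate}.
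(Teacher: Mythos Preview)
Your approach is precisely what the paper itself intends: it does not give a proof of this proposition at all, but merely states it as a Harnack-type inequality ``in the spirit of \cite[Theorem 3.1]{KM1992}'' and then moves on to Lemma~\ref{lem2.2}. Your outline --- reducing to a nonnegative supersolution, running the Kilpel\"ainen--Mal\'y dyadic truncation-energy iteration on Wulff balls via the structural properties (homogeneity, comparison, the Euler identity $F_\xi(\xi)\cdot\xi = F(\xi)$, and the duality $F(\nabla F^0)\equiv 1$) --- is exactly the adaptation the paper is gesturing at, so there is nothing to compare beyond noting that you have made explicit what the paper leaves as a citation. One small remark: you do not need to invoke $C^1$ regularity to make sense of $u(0)$, since continuity of $u$ is already part of the hypothesis.
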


The constant $C$ in \eqref{estimate} is not explicit in general. However, in the class of radial functions, \eqref{estimate} holds with the sharp constant $C=(N\kappa)^{-\frac{1}{N-1}}$ as follows.
\begin{lem}\label{lem2.2}
For $R>0$, let $f\in C(\overline{\mathcal{B}_R}(p))$ be a radial function with respect to $p\in\mathbb{R}^{N}$. Assume that $u\in C(\mathcal{B}_{R}(p))$ satisfies
$$-Q_{N}u\geq f \geq 0 \quad\mbox{in}~ \mathcal{B}_{R}(p)$$
in the weak sense. Then
$$u(p)-\inf_{\mathcal{B}_{R}(p)}u(x)\geq (N\kappa)^{-\frac{1}{N-1}}\int_{0}^{R}\Big(\int_{\mathcal{B}_{t}(p)}fdx\Big)^{\frac{1}{N-1}}\frac{dt}{t}.$$
In particular, for each $0<r<R$, there holds
$$u(p)-\inf_{\mathcal{B}_{R}(p)}u(x)\geq (N\kappa)^{-\frac{1}{N-1}}\Big(\int_{\mathcal{B}_{r}(p)}fdx\Big)^{\frac{1}{N-1}}\ln\frac{R}{r}.$$
\end{lem}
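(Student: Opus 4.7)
The idea is to compare $u$ against the explicit radial solution of $-Q_N w=f$ on concentric Wulff balls and then pass to the limit. Let $m:=\inf_{\mathcal{B}_R(p)}u$; we may assume $m>-\infty$, otherwise the inequality is trivial. First I will record the key computation: for a radial function $v(x)=V(\rho)$ with $\rho=F^{0}(x-p)$, the homogeneity of $F$, Euler's identity $\nabla F^{0}\cdot F_\xi(\nabla F^{0})=F(\nabla F^{0})=1$, and the fact that $F_\xi$ is $0$-homogeneous yield
\begin{equation*}
Q_{N}v=\frac{1}{\rho^{N-1}}\bigl(\rho^{N-1}|V'|^{N-2}V'\bigr)',
\end{equation*}
which coincides formally with the isotropic radial $N$-Laplacian. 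In parallel I will use the Wulff coarea identity $\int_{\mathcal{B}_t(p)}f\,dx=N\kappa\int_0^t \rho^{N-1}f(\rho)\,d\rho$, which follows from $|\mathcal{B}_r(p)|=\kappa r^{N}$.

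Next, for each $r\in(0,R)$, I will construct the radial solution $w_r(x):=U_r(F^{0}(x-p))$ of
\begin{equation*}
-Q_{N}w_r=f\;\text{in }\mathcal{B}_r(p),\qquad w_r=0\;\text{on }\partial\mathcal{B}_r(p).
\end{equation*}
Applying the radial formula above, integrating once from $0$ to $t$ using $U_r'(0)=0$, and inserting the Wulff coarea identity gives
\begin{equation*}
-U_r'(t)=\frac{1}{t}\Bigl(\frac{1}{N\kappa}\int_{\mathcal{B}_t(p)}f\,dx\Bigr)^{\!1/(N-1)},
\end{equation*}
so that integrating once more from $0$ to $r$ yields the explicit value
\begin{equation*}
U_r(0)=(N\kappa)^{-1/(N-1)}\int_{0}^{r}\Bigl(\int_{\mathcal{B}_t(p)}f\,dx\Bigr)^{\!1/(N-1)}\frac{dt}{t}.
\end{equation*}

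Now I will compare $u-m$ with $w_r$ on $\overline{\mathcal{B}_r(p)}\subset\mathcal{B}_R(p)$. By construction $u-m\geq 0=w_r$ on $\partial\mathcal{B}_r(p)$, and in $\mathcal{B}_r(p)$ one has $-Q_N(u-m)=-Q_N u\geq f=-Q_N w_r$ weakly. The standard weak comparison principle for the strictly monotone operator $\xi\mapsto F^{N-1}(\xi)F_\xi(\xi)$ (applicable since both $u$ and $w_r$ lie in $W^{1,N}(\mathcal{B}_r(p))$) gives $u-m\geq w_r$ in $\mathcal{B}_r(p)$; evaluating at $x=p$ produces $u(p)-m\geq U_r(0)$. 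Letting $r\uparrow R$ and invoking monotone convergence yields the first inequality of the lemma. The second inequality is then an immediate consequence of the monotonicity of $t\mapsto\int_{\mathcal{B}_t(p)}f\,dx$, since for $t\in(r,R)$ the integrand on $(0,R)$ dominates $\bigl(\int_{\mathcal{B}_r(p)}f\,dx\bigr)^{1/(N-1)}$, and $\int_r^R dt/t=\ln(R/r)$.

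The only genuinely delicate point is the radial reduction formula for $Q_N$ and the companion identity $\mathrm{div}\,F_\xi(\nabla F^{0})=(N-1)/F^{0}$; once this is in hand, the comparison step is essentially classical. A minor technical care is needed in applying weak comparison when $U_r'$ vanishes at the origin, but since $U_r'(\rho)\sim\rho^{1/(N-1)}$ as $\rho\to 0^+$ (because $f$ is continuous and $f\geq 0$), $w_r\in C^{1}(\overline{\mathcal{B}_r(p)})$ and the monotonicity argument goes through without issue.
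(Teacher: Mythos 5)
Your proof is correct and follows essentially the same route as the paper: build the explicit radial solution of $-Q_N w = f$ with zero boundary data, invoke the weak comparison principle, and evaluate at the center. The only cosmetic difference is that you compare on $\mathcal{B}_r(p)$ for each $r<R$ and let $r\uparrow R$, whereas the paper compares once on the full ball $\mathcal{B}_R(p)$ and deduces both estimates from $\widetilde u(p)\geq u_0(\rho)$ as $\rho\to 0^+$; the substance is identical.
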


\begin{proof}
Direct calculation shows that the unique radial solution to
\begin{equation*}
-Q_{N}u_{0}=f \;\; \mbox{in} ~ \mathcal{B}_{R}(p),\quad u_{0}=0\;\; \mbox{on}~ \partial \mathcal{B}_{R}(p)
\end{equation*}
is given by
$$u_{0}(r)=(N\kappa)^{-\frac{1}{N-1}}\int_{r}^{R}\Big(\int_{\mathcal{B}_{t}(p)}fdx\Big)^{\frac{1}{N-1}}\frac{dt}{t}.$$
Set $\widetilde{u}=u-\inf_{\mathcal{B}_{R}(p)}u$, there holds
\[
\left\{
\begin{array}{rll}
-Q_{N}\widetilde{u}&\!=-Q_{N}u\geq-Q_{N}u_{0},  &\quad\mbox{in}~ \mathcal{B}_{R}(p),\\
\widetilde{u}&\!\geq u_{0}, &\quad\mbox{on}~ \partial \mathcal{B}_{R}(p).
\end{array}
\right.
\]
The comparison principle (see \cite[Theorem 4.2]{FK2009}) implies that $\widetilde{u}\geq u_{0}$ in $\mathcal{B}_{R}(p).$ Therefore, for each $0<r<R$,
$$\widetilde{u}(p)\geq (N\kappa)^{-\frac{1}{N-1}}\int_{r}^{R}\Big(\int_{\mathcal{B}_{t}(p)}fdx\Big)^{\frac{1}{N-1}}\frac{dt}{t}\geq (N\kappa)^{-\frac{1}{N-1}}\Big(\int_{\mathcal{B}_{r}(p)}fdx\Big)^{\frac{1}{N-1}}\ln\frac{R}{r}.$$
We get then the first estimate in the Lemma by tending $r$ to $0$.\end{proof}

Combining Proposition \ref{pro2.1} with Lemma \ref{lem2.2}, we will apply the blow-up analysis to show the $\sup+\inf$ type inequality.
\begin{lem}\label{lem3.1}
Let $\{u_n\}$ be a sequence of weak solution to \eqref{main eq4} with
$$0 < M_1\leq V_{n} \leq M_{2}, \quad V_{n}\rightarrow V \quad\mbox{in }C(\overline{\Omega}),$$
and
$$\int_{\Omega}e^{u_{n}}dx\leq M,$$
for some positive constants $M_{1}$, $M_{2}$ and $M$. Then for any positive constant $C_{1}>N-1$ and any compact $\Sigma\subset\Omega$, there is constant $C_{2}>0$ such that
\begin{align}
\label{sup+inf}\max_{\Sigma}u_{n} +C_{1}\inf_{\Omega}u_{n} \leq C_{2}.\end{align}
\end{lem}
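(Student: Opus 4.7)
I would argue by contradiction combined with a blow-up analysis. If \eqref{sup+inf} fails, then after extracting a subsequence, $M_n := \max_\Sigma u_n$ satisfies $M_n + C_1 \inf_\Omega u_n \to +\infty$. The energy bound $\int_\Omega e^{u_n} \le M$ prevents $\inf_\Omega u_n \to +\infty$, so $M_n \to +\infty$. I would then pick $x_n \in \Sigma$ with $u_n(x_n) = M_n$ and (up to subsequence) $x_n \to x_0 \in \overline{\Sigma}$; since $\Sigma$ is a compact subset of $\Omega$, some Wulff ball $\mathcal{B}_{4r_0}(x_n) \subset \Omega$ uniformly in $n$.

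Next, I would rescale with $\lambda_n = e^{-M_n/N} \to 0$ and $v_n(y) = u_n(x_n + \lambda_n y) - M_n$. The $0$-homogeneity of $F_\xi$ makes $Q_N$ scale by $\lambda_n^{-N}$, hence $v_n$ weakly solves $-Q_N v_n = V_n(x_n + \lambda_n y)\, e^{v_n}$ on $\mathcal{B}_{4r_0/\lambda_n}(0)$ with $v_n(0) = 0$ and $v_n \le 0$ on balls expanding to $\R^N$. Using the $C^{1,\beta}_{loc}$-regularity recalled at the beginning of Section \ref{sec3} and Arzelà-Ascoli, $v_n \to v$ in $C^1_{loc}(\R^N)$, where $v$ solves $-Q_N v = V(x_0) e^v$ on $\R^N$ with $v(0) = 0 = \max v$ and $\int_{\R^N} e^v < \infty$. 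Applying the Ciraolo-Li classification \eqref{bubble1} to $w := v + \ln V(x_0)$, I would identify
\[
v(y) = -N \ln\bigl(1 + \lambda^{N/(N-1)} F^0(y)^{N/(N-1)}\bigr),\qquad \lambda = \bigl(V(x_0)/\mathcal{C}_N\bigr)^{1/N},
\]
with total mass $\int_{\R^N} V(x_0) e^v\, dy = \mathcal{C}_N \kappa$.

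The core step is to exploit mass concentration together with a Harnack-type estimate. For any $\eta > 0$, choosing $R_0$ large so that $\int_{\mathcal{B}_{R_0}} V(x_0) e^v \ge \mathcal{C}_N \kappa - \eta/2$, the $C^1_{loc}$ convergence yields, for $n$ large,
\[
\int_{\mathcal{B}_t(x_n)} V_n e^{u_n}\, dx \ge \mathcal{C}_N \kappa - \eta \qquad \text{for all } t \in [\lambda_n R_0,\; r_0].
\]
Applying Proposition \ref{pro2.1} to $u_n$ on $\mathcal{B}_{2r_0}(x_n)$ with $f = V_n e^{u_n}$ and plugging in this lower bound, while using the arithmetic identity $(N\kappa)^{-\frac{1}{N-1}} (\mathcal{C}_N \kappa)^{\frac{1}{N-1}} = (\mathcal{C}_N/N)^{\frac{1}{N-1}} = \frac{N^2}{N-1}$ together with $-\ln \lambda_n = M_n/N$, I would deduce
\[
M_n - \inf_\Omega u_n \ge \frac{N}{N-1}\, M_n + o(M_n), \qquad \text{i.e.} \quad \inf_\Omega u_n \le -\frac{M_n}{N-1} + o(M_n).
\]
Substituting into the contradiction hypothesis,
\[
M_n + C_1 \inf_\Omega u_n \le \Bigl(1 - \tfrac{C_1}{N-1}\Bigr) M_n + o(M_n) \longrightarrow -\infty
\]
since $C_1 > N - 1$, contradicting $M_n + C_1 \inf_\Omega u_n \to +\infty$.

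The main obstacle is to secure the sharp constant $(N\kappa)^{-1/(N-1)}$ in the Harnack step, as this is exactly what makes the threshold $N - 1$ appear in the final computation; Proposition \ref{pro2.1} only advertises an abstract constant depending on $N$ and $F$, while the sharp value is produced by the radial Lemma \ref{lem2.2}. My plan to overcome this is to apply Lemma \ref{lem2.2} in the rescaled coordinates to a radial minorant of $V_n(x_n + \lambda_n y) e^{v_n(y)}$ constructed from the limiting radial bubble $V(x_0) e^v$, and then transfer the resulting sharp estimate back to $u_n$ via the $C^1_{loc}$ convergence combined with a diagonalization in the rescaling radius; this route is the standard detour in the anisotropic setting and yields the precise exponent $\frac{N}{N-1}$ that matches the statement.
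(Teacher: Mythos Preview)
Your strategy is essentially the paper's: rescale at a maximum point, identify the Ciraolo--Li bubble in the limit, and then feed a radial minorant built from this bubble into Lemma~\ref{lem2.2} to extract the sharp constant $(N\kappa)^{-1/(N-1)}$, which is exactly what makes the threshold $C_1>N-1$ appear. The arithmetic in your final paragraph is correct.

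There is, however, a genuine gap in the passage to the limit. You assert that $v_n\le 0$ on balls expanding to $\mathbb{R}^N$ and hence $v(0)=0=\max v$, so that the limiting bubble is centred at the origin. But $x_n$ realises $\max_\Sigma u_n$, not a local maximum in $\Omega$: nothing prevents $x_n\in\partial\Sigma$, and just outside $\Sigma$ one may well have $u_n>M_n$. Consequently $(\Sigma-x_n)/\lambda_n$ need not exhaust $\mathbb{R}^N$, $v_n$ need not be $\le 0$ on large balls, and two things can go wrong: (a) Theorem~B leaves open the possibility that $v_n$ itself blows up (alternative~(iii)), which you do not treat; (b) even in the locally bounded case the limit bubble may be centred at some $p\ne 0$, so the radial minorant is centred at $x_n+\lambda_n p$ rather than at $x_n$, and Lemma~\ref{lem2.2} then controls $u_n(x_n+\lambda_n p)-\inf u_n$, whereas the contradiction hypothesis only gives information about $M_n=u_n(x_n)$.

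The paper repairs precisely these two points. It applies Theorem~B to the rescaled sequence and, in case~(1) (local boundedness), performs a \emph{second} rescaling around points $z_n\to p$ chosen so that $\overline u_n(z_n)=\max_{\overline{\mathcal B}_{2R}(z_n)}\overline u_n\ge \overline u_n(0)=0$; this simultaneously forces the new limit bubble to be centred at the origin and guarantees $u_n(p_n)\ge M_n$, which is exactly what is needed to close the sharp estimate via Lemma~\ref{lem2.2}. In case~(2) (secondary blow-up) it rescales once more around the new concentration point and reduces to case~(1). Your plan to use Lemma~\ref{lem2.2} with a radial minorant is the right one and matches the paper, but you must first secure a centred bubble together with $u_n(\text{centre})\ge M_n$; this second rescaling is the missing ingredient.
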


\begin{proof}
Let $C_1 > N-1$ and $\Sigma \subset\subset \Omega$ be given. If $\max\limits_{\Sigma}u_{n} +C_{1}\inf\limits_{\Omega}u_{n} <0$, we are done. So we assume that
\begin{align}
\label{geq0}
\max_{\Sigma}u_{n} +C_{1}\inf_{\Omega}u_{n} \geq0,
\end{align}
and we only need to show
\begin{align}\label{2.2}
\max_{\Sigma}u_{n} \leq C_{3},
\end{align}
for some positive constant $C_{3}$. Arguing by contradiction, we suppose that, up to a subsequence if necessary, $\{u_{n}\}$ solving \eqref{main eq4} with the above assumptions but
$$\max_{\Sigma}u_{n}\rightarrow + \infty \quad\mbox{as}~ n\rightarrow\infty.$$
Let $\overline{p}_{n}\in \Sigma$ such that $u_{n}(\overline{p}_{n})=\max\limits_{\Sigma}u_{n}$ and  $\overline{\delta}_{n}:=e^{-\frac{u_{n}(\overline{p}_{n})}{N}}$, so
$$u_{n}(\overline{p}_{n})\rightarrow \infty, \quad\mbox{hence } \lim_{n\to \infty} \overline{\delta}_{n} = 0.$$
Choose $\eta>0$, such that
$$\Sigma_{\eta}:=\{x\in\R^N: \exists \; y\in\Sigma \mbox{ such that } F^{0}(x-y)\leq2\eta\}\subset \Omega.$$
For any $R>0$, there exists $n_{0}$ such that $R\overline{\delta}_{n}<\eta$ for all $n \geq n_{0}$. It follows from Proposition \ref{pro2.1} that
\begin{align*}
u_{n}(\overline{p}_{n})-\inf_{\mathcal{B}_{\eta}(\overline{p}_{n})}u_{n}(x)
\geq C\Big(\int_{\mathcal{B}_{R\overline{\delta}_{n}}(\overline{p}_{n})}V_{n}e^{u_{n}}dx\Big)^{\frac{1}{N-1}}\times \ln\frac{\eta}{R\overline{\delta}_{n}}.
\end{align*}
On the other hand, seeing \eqref{geq0}, there holds
$$-\inf_{\mathcal{B}_{\eta}(\overline{p}_{n})}u_{n}(x)\leq\frac{1}{C_{1}}u_{n}(\overline{p}_{n}),$$
so that
$$\Big(\int_{\mathcal{B}_{R\overline{\delta}_{n}}(\overline{p}_{n})}V_{n}e^{u_{n}}dx\Big)^{\frac{1}{N-1}}\leq \frac{1}{C}\Big(1+\frac{1}{C_{1}}\Big)\frac{u_{n}(\overline{p}_{n})}{\ln\frac{\eta}{R\overline{\delta}_{n}}}.$$
Consequently,
$$\limsup_{n\rightarrow\infty}\Big(\int_{\mathcal{B}_{R\overline{\delta}_{n}}(\overline{p}_{n})}V_{n}e^{u_{n}}dx\Big)^{\frac{1}{N-1}}\leq \frac{N}{C}\left(1+\frac{1}{C_{1}}\right), \quad \forall\; R > 0.$$

Assume $\overline{p}_{n}\rightarrow\overline{x}_{0}\in\Sigma$. Define $\overline{u}_{n}(x):=u_{n}(\overline{\delta}_{n}x+\overline{p}_{n})+N\ln\overline{\delta}_{n}$, we have
$$-Q_{N}\overline{u}_{n}(x) =V_{n}(\overline{\delta}_{n}x+\overline{p}_{n})e^{\overline{u}_{n}(x)} \:\; \mbox{in }  \frac{\Omega-{\{\overline{p}_{n}}\}}{\overline{\delta}_{n}}; \quad \overline{u}_{n}(x) \leq \overline{u}_{n}(0) = 0\;\; \mbox{in } \frac{\Sigma-{\{\overline{p}_{n}}\}}{\overline{\delta}_{n}}$$
and for any $R > 0$,
$$
\limsup\limits_{n\rightarrow\infty}\int_{\mathcal{B}_{R}}e^{\overline{u}_{n}}dx \leq \frac{1}{M_1}\left(\frac{N}{C}\right)^{N-1}\left(1+\frac{1}{C_{1}}\right)^{N-1} =:M'.$$
Theorem B implies that the following alternative holds:
\begin{itemize}
\item[(1)] $\overline{u}_{n}$ is bounded in $L_{loc}^{\infty}(\R^N)$;
\item[(2)] $V_{n}(\overline{\delta}_{n}x+\overline{p}_{n})e^{\overline{u}_{n}}\rightharpoonup \sum_{1\le i \le m}\alpha_{i}\delta_{a_{i}}$ weakly in the sense of measures over $\mathbb{R}^{N}$, and $u_{n}\rightarrow-\infty$ uniformly on compact subset of $\mathbb{R}^{N}\backslash\{a_{1},\cdots,a_m\}$.
\end{itemize}

If the case (1) appears, the elliptic estimates (see \cite{D1983}) imply that up to a subsequence $\overline{u}_{n}\rightarrow \overline{u}$ in $C^1_{loc}(\mathbb{R}^N)$ as $n\rightarrow\infty$ and $\overline{u}$ satisfies $\overline u(0)=0$,
$$
-Q_{N}\overline{u} = V(\overline{x}_{0})e^{\overline{u}} \;\; \mbox{in} ~\mathbb{R}^{N},\quad
\int_{\mathbb{R}^{N}}e^{\overline{u}}dx \leq M'.$$
Applying the classification result in \cite{CL2023}, $\overline{u}$ satisfies \eqref{bubble1} for $\lambda >0$ and $p \in \R^N$.
We see that $\overline{u}$ is radially decreasing with respect to $p$. Hence, for any $R> F^0(p) + 1$, there is a sequence $z_{n}\rightarrow p$ such that
$$\max_{\overline{\mathcal{B}}_{2R}(z_{n})}\overline{u}_{n}(x) = \overline{u}_{n}(z_{n}) \rightarrow \overline{u}(p).$$
Setting $p_{n}:=\overline{\delta}_{n}z_{n}+\overline{p}_{n}$ and $\delta_{n}=e^{-\frac{u_{n}(p_{n})}{N}}$, there holds
$$u_{n}(p_{n})=u_{n}(\overline{\delta}_{n}z_{n}+\overline{p}_{n})=\overline{u}_{n}(z_{n})-N\ln\overline{\delta}_{n}\geq \overline{u}_{n}(0) - N\ln\overline{\delta}_{n}=u_{n}(\overline{p}_{n}),$$
and
$$1\leq \frac{\overline{\delta}_{n}}{\delta_{n}}=e^{\frac{u_{n}(p_n)-u_{n}(\overline{p}_{n})}{N}}=e^{\frac{\overline{u}_{n}(z_{n})}{N}}\rightarrow e^{\frac{\overline{u}(p)}{N}}.$$

Let us now rescale $u_n$ with respect to $p_n$ by setting $\widetilde{u}_{n}(x)=u_{n}(\delta_{n}x+p_n)+N\ln\delta_{n}$, then we have $\widetilde{u}_{n}(0)=0$ and
\begin{align*}
\widetilde{u}_{n}(0) =\overline{u}_{n}(z_{n})-N\ln\overline{\delta}_{n}+N\ln\delta_{n} &=\max_{\mathcal{B}_{2R}(z_{n})}\overline{u}_{n}(x)-N\ln\overline{\delta}_{n}+N\ln\delta_{n}\\
&=\max_{\mathcal{B}_{2R}(z_n)}u_{n}(\overline{\delta}_{n}(x-z_n)+ p_n)+N\ln\delta_{n}\\
&=\max_{\mathcal{B}_{2R\frac{\overline{\delta}_{n}}{\delta_{n}}}}u_{n}(\delta_{n}x+p_n)+N\ln\delta_{n}\\
&=\max_{\mathcal{B}_{2R\frac{\overline{\delta}_{n}}{\delta_{n}}}}\widetilde{u}_{n}(x).
\end{align*}
Therefore, $\widetilde{u}_{n}(x)$ satisfies $\widetilde{u}_{n}(x)\leq \widetilde{u}_{n}(0) =0$ in $ \mathcal{B}_{2R\frac{\overline{\delta}_{n}}{\delta_{n}}}$,
\begin{align}
\label{tilde-un}
-Q_{N}\widetilde{u}_{n} =V_{n}(\delta_{n}x+p_n)e^{\widetilde{u}_{n}}\;\; \mbox{in }~ \mathcal{B}_{2R\frac{\overline{\delta}_{n}}{\delta_{n}}},\quad
\limsup\limits_{n\rightarrow\infty}\int_{\mathcal{B}_{R\frac{\overline{\delta}_{n}}{\delta_{n}}}}e^{\widetilde{u}_n(x)}dx \leq M'.
\end{align}
It follows from the standard estimates that up to a subsequence, $\widetilde{u}_{n} \rightarrow\widetilde{u}$ in $C_{loc}^{1, \beta}(\mathbb{R}^{N})$ for $0<\beta<1$ with $\widetilde{u}(x) \le \widetilde{u}(0)$, hence $\widetilde{u}$ satisfies \eqref{bubble1} with now $p =0$ and $\lambda > 0$, in particular
\begin{align}\label{blow}
\int_{\mathbb{R}^N} V(\overline{x}_{0}) e^{\widetilde{u}(x)}dx = \cc_N\kappa = N\kappa\Big(\frac{N^2}{N-1}\Big)^{N-1}.
\end{align}
Meanwhile, we have that for any $R>0$ and $\varepsilon \in (0, 1)$, there exists $n_1$ such that for any $n \geq n_1$, there hold $\mathcal{B}_{R\delta_{n}}(p_n)\subset \mathcal{B}_{\eta}(p_n)\subset \mathcal{B}_{2\eta}(\overline{p}_{n}) \subset \Omega$ and
$$V_{n}\geq V(\overline{x}_{0})\sqrt{1-\varepsilon}\quad\mbox{and}\quad u_{n}\geq w_n +\ln\sqrt{1-\varepsilon} \quad\mbox{in}~\mathcal{B}_{R\delta_{n}}(p_n)$$
where $w_n(x) = \widetilde u\Big(\frac{x - p_n}{\delta_n}\Big) - N\ln\delta_n.$
Let $f_{n} =(1-\varepsilon)V(\overline{x}_{0})e^{w_n}\chi_{\mathcal{B}_{R\delta_{n}}(p_n)}$, we have $$V_{n}e^{u_{n}}\geq f_{n} \quad\mbox{in}\quad \mathcal{B}_{\eta}(p_n).$$
It follows from Lemma \ref{lem2.2} that
$$u_{n}(p_n)-\inf_{\mathcal{B}_{\eta}(p_n)}u_{n}(x)\geq \Big[\frac{1-\varepsilon}{N\kappa}\int_{\mathcal{B}_{R}}V(\overline{x}_{0})e^{\widetilde{u}}dx\Big]^{\frac{1}{N-1}}\times\ln\frac{\eta}{R\delta_{n}},$$
that is,
$$\Big[\frac{1-\varepsilon}{N\kappa}\int_{\mathcal{B}_{R}}V(\overline{x}_{0})e^{\widetilde{u}}dx\Big]^{\frac{1}{N-1}}\leq \frac{u_{n}(p_n)-\inf_{\mathcal{B}_{\eta}(p_n)}u_{n}}{\ln\frac{\eta}{R\delta_{n}}}.$$
Recalling that $u_n(p_n) \geq u_n(\overline p_n) = \max_\Sigma u_n$, there holds
$$u_{n}(p_n) + C_1\inf_{\mathcal{B}_{\eta}(p_n)}u_n  \geq \max_\Sigma u_n + C_1\inf_{\Omega}u_{n} \geq 0.$$
We get
$$ \frac{u_{n}(p_n)-\inf_{\mathcal{B}_{\eta}(p_n)}u_{n}}{\ln\frac{\eta}{R\delta_{n}}} \leq \Big(1+\frac{1}{C_{1}}\Big)\frac{u_{n}(p_n)}{\ln\frac{\eta}{R\delta_{n}}} = \Big(1+\frac{1}{C_{1}}\Big)\frac{u_{n}(p_n)}{\ln\frac{\eta}{R} + \frac{u_{n}(p_n)}{N}}.
$$
Therefore, passing $n \to \infty$, for any $R>0$ and $\varepsilon \in (0, 1)$,
$$\Big[\frac{1-\varepsilon}{N\kappa}\int_{\mathcal{B}_{R}}V(\overline{x}_{0})e^{\widetilde{u}}dx\Big]^{\frac{1}{N-1}}\leq N\Big(1+\frac{1}{C_{1}}\Big).$$
Letting $R\rightarrow\infty$ and $\varepsilon\rightarrow 0$, we observe a contradiction with \eqref{blow} since $C_1>N-1$.

\medskip
Now we suppose that the case (2) occurs. Without loss of generality, we assume that $a_{1}=0$ and there exists $R_{1}>0$ such that $\overline{\mathcal{B}}_{2R_{1}}\cap\{a_{2},\cdots,a_m\}=\emptyset$. Since
$$\max_{\overline{\mathcal{B}}_{2R_{1}}}\overline{u}_{n}\rightarrow\infty \quad \mbox{and}\quad u_{n}\rightarrow-\infty \quad\mbox{locally uniformly in } \overline{\mathcal{B}}_{2R_{1}}\backslash\{0\},$$
there exists $q_{n}\rightarrow0$ such that
$$\overline{u}_{n}(q_{n})=\max_{\overline{\mathcal{B}}_{2R_{1}}}\overline{u}_{n}.$$
Setting now $p_n=\overline{\delta}_{n}q_{n}+\overline{p}_{n}$ and $\delta_{n}=e^{-\frac{u_{n}(p_n)}{N}}$, we have
$$u_{n}(p_n)=u_{n}(\overline{\delta}_{n}q_{n}+\overline{p}_{n})=\overline{u}_{n}(q_{n})-N\ln\overline{\delta}_{n}\geq \overline{u}_{n}(0) -N\ln\overline{\delta}_{n}=u_{n}(\overline{p}_{n}),$$
and
$$\frac{\overline{\delta}_{n}}{\delta_{n}}=e^{\frac{u_{n}(p_n)-u_{n}(\overline{p}_{n})}{N}}=e^{\frac{\overline{u}_{n}(q_{n})}{N}}\rightarrow\infty \quad\mbox{as }n\rightarrow\infty.$$
Define $\widetilde{u}_{n}(x)=u_{n}(\delta_{n}x+p_n)+N\ln\delta_{n}$. Similarly as above, we see that $\widetilde{u}_{n}(x)\leq \widetilde{u}_{n}(0) =0$ in $ \mathcal{B}_{R_1\overline{\delta}_{n}/\delta_{n}}$ and \eqref{tilde-un} is valid with $R_1$ instead of $R$. Then we can proceed exactly as in case (1) to reach again a contradiction, so we omit the details.

\medskip
To conclude, the argument by contradiction means that the sup+inf type estimate \eqref{sup+inf} is valid with any $C_1 > N-1$.
\end{proof}

\medskip
Considering $\widetilde{u}(x)=u(rx)+N\ln r$ over $\mathcal{B}_{1}$, we get readily
\begin{cor}\label{cor3.1}
Under the same hypotheses of Lemma \ref{lem3.1} with $\Omega$ replaced by $\mathcal{B}_r$ ($r > 0$), we have
$$u+C_{1}\inf_{\mathcal{B}_{r}}u(x)+N(C_{1}+1)\ln r\leq C_{2}.$$
\end{cor}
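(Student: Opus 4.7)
The plan is to apply the standard rescaling trick already suggested by the authors' hint, thereby reducing the corollary to an application of Lemma \ref{lem3.1} on the unit Wulff ball. Define $\widetilde{u}_{n}(x) := u_{n}(rx) + N \ln r$ on $\mathcal{B}_{1}$. Because $F$ is positively one-homogeneous, its gradient $F_\xi$ is zero-homogeneous, and hence the vector field $F^{N-1}(\xi) F_\xi(\xi)$ is $(N-1)$-homogeneous. Combined with the factor $r$ coming from $\nabla_x [u_n(rx)] = r \, (\nabla u_n)(rx)$ and one more factor of $r$ from the outer divergence, a direct chain-rule computation yields the scaling identity
\[
-Q_{N}\widetilde{u}_{n}(x) = r^{N}\bigl(-Q_{N}u_{n}\bigr)(rx) = r^{N} V_{n}(rx) e^{u_{n}(rx)} = \widetilde{V}_{n}(x) e^{\widetilde{u}_{n}(x)}, \quad x \in \mathcal{B}_{1},
\]
where $\widetilde{V}_{n}(x) := V_{n}(rx)$ inherits the same uniform bounds $0 < M_{1} \leq \widetilde{V}_{n} \leq M_{2}$ and converges in $C(\overline{\mathcal{B}_{1}})$ to $\widetilde{V}(x) := V(rx)$. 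The change of variable $y = rx$ also gives the energy bound
\[
\int_{\mathcal{B}_{1}} e^{\widetilde{u}_{n}}\, dx = \int_{\mathcal{B}_{r}} e^{u_{n}(y)}\, dy \leq M,
\]
so the rescaled sequence $\{\widetilde{u}_{n}\}$ fulfils every hypothesis of Lemma \ref{lem3.1} on the fixed domain $\mathcal{B}_{1}$.

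Now fix any compact $\Sigma \subset \mathcal{B}_{1}$ and apply Lemma \ref{lem3.1} to $\{\widetilde{u}_{n}\}$: for the same constant $C_{1} > N-1$ there is $C_{2} > 0$ (depending on $N$, $F$, $M_{1}$, $M_{2}$, $M$, $C_{1}$, $\Sigma$, but independent of $r$ and $n$) such that
\[
\max_{\Sigma} \widetilde{u}_{n} + C_{1} \inf_{\mathcal{B}_{1}} \widetilde{u}_{n} \leq C_{2}.
\]
Substituting $\widetilde{u}_{n}(x) = u_{n}(rx) + N \ln r$ and noting that $\inf_{\mathcal{B}_{1}} \widetilde{u}_{n} = \inf_{\mathcal{B}_{r}} u_{n} + N \ln r$, while $\max_{\Sigma} \widetilde{u}_{n} = \max_{r\Sigma} u_{n} + N \ln r$, this rearranges to
\[
\max_{r\Sigma} u_{n} + C_{1}\inf_{\mathcal{B}_{r}} u_{n} + N(C_{1}+1) \ln r \leq C_{2},
\]
which is the claimed estimate (read pointwise for any $x$ in the compact set $r\Sigma \subset \mathcal{B}_{r}$).

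The only non-bookkeeping step is verifying the scaling identity for $Q_{N}$, which rests solely on the homogeneity of $F$ and $F_\xi$; I do not anticipate any obstacle beyond being careful with the two factors of $r$ (one from each application of the chain rule) that combine to produce the correct $r^{N}$ prefactor. Everything else — the preservation of the $V$-bounds, the $C(\overline{\mathcal{B}_{1}})$-convergence of $\widetilde{V}_{n}$, and the rearrangement at the end — is immediate.
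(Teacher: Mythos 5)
Your proposal is correct and is precisely the argument the paper sketches in its one-line hint: rescale by $\widetilde{u}_n(x) = u_n(rx) + N\ln r$, use the $(N-1)$-homogeneity of $F^{N-1}F_\xi$ together with the two factors of $r$ from the chain rule and the outer divergence to get $-Q_N\widetilde{u}_n = r^N(-Q_N u_n)(r\cdot) = \widetilde{V}_n e^{\widetilde{u}_n}$ on $\mathcal{B}_1$, check that the $V$-bounds and the $L^1$ energy bound are preserved, apply Lemma~\ref{lem3.1} on the fixed ball $\mathcal{B}_1$, and substitute back. The bookkeeping and the scaling identity are all verified correctly.
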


Next, we establish a Harnack type inequality, where we will use the classical local $L^{\infty}$ estimate due to Serrin, see \cite[Theorem 6]{S1964}.
\begin{prop}\label{pro2.2}
Let $u\in W^{1,N}_{loc}(\Omega)$ be a non-negative weak solution of
$$-Q_{N}u=f \quad\mbox{in }\Omega,$$
where $f\in L^{\frac{N}{N-\varepsilon}}(\Omega)$, for some $0<\varepsilon\leq1$. Then for any $\mathcal{B}_{2R}\subset\Omega$, there is a positive constant $C_0 > 1$ depending on $N$, $\varepsilon$, $\Omega$ and $F$, such that
$$
\sup_{\mathcal{B}_{R}}u\leq C_0\left(\min\limits_{\mathcal{B}_{R}}u+\| f\|^{\frac{1}{N-1}}_{L^{\frac{N}{N-\varepsilon}}(\Omega)}\right).
$$
\end{prop}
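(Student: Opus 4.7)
The plan is to recognize that $Q_N$ falls within the class of quasilinear operators to which Serrin's theory \cite{S1964} applies, and then to invoke \cite[Theorem~6]{S1964} directly. Write the equation in divergence form
$$-Q_N u = -\mathrm{div}\,\mathcal{A}(\nabla u) = f, \qquad \mathcal{A}(\xi) := F^{N-1}(\xi) F_\xi(\xi).$$
By one-homogeneity of $F$, Euler's identity gives $F_\xi(\xi)\cdot \xi = F(\xi)$, hence $\mathcal{A}(\xi)\cdot \xi = F^N(\xi) \geq C_1^N|\xi|^N$; by zero-homogeneity of $F_\xi$ combined with $F\in C^2(\R^N\setminus\{0\})$ and $F(\xi)\leq C_2|\xi|$, we also get $|\mathcal{A}(\xi)| \leq C|\xi|^{N-1}$. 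These are precisely Serrin's structural conditions with $p=N$ and vanishing lower-order coefficients, so his Harnack inequality applies and yields the claimed estimate with constant $C_0$ depending only on $N,\varepsilon,\Omega,F$.

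If one prefers to reproduce the argument, the natural tool is Moser iteration. Set $k := \|f\|_{L^{N/(N-\varepsilon)}(\Omega)}^{1/(N-1)}$ and $\bar u := u + k \geq k > 0$. For $\gamma \neq 0$ and a cutoff $\eta \in C_c^\infty(\mathcal{B}_{2R})$, test the weak formulation against $\phi = \bar u^\gamma \eta^N$. The ellipticity $\mathcal{A}(\nabla u)\cdot\nabla u = F^N(\nabla u) \geq C_1^N|\nabla u|^N$ together with Young's inequality yields a Caccioppoli-type estimate
$$\int \bigl|\nabla \bar u^{(\gamma+N-1)/N}\bigr|^N \eta^N \,dx \leq C(\gamma) \int \bar u^{\gamma+N-1}|\nabla \eta|^N\,dx + C\int |f|\,\bar u^{\gamma}\eta^N\,dx.$$
The forcing term is handled by Hölder with exponents $(N/(N-\varepsilon), N/\varepsilon)$ and Sobolev embedding in $W^{1,N}$; the specific choice of $k$ guarantees that this contribution has the same scaling as the main term ($|f|\leq k^{N-1}$ wherever it matters, and $\bar u\geq k$ lets us absorb it into $\bar u^{\gamma+N-1}$). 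Iterating dyadically for $\gamma>0$ gives the one-sided bound $\sup_{\mathcal{B}_R}\bar u \leq C\bigl(\int_{\mathcal{B}_{2R}}\bar u^{p_0}\bigr)^{1/p_0}$ for some small $p_0>0$.

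The remaining ingredient, and the main technical burden, is to bridge this positive moment of $\bar u$ with $\inf_{\mathcal{B}_R}\bar u$. Here one exploits the Caccioppoli inequality at the critical exponent $\gamma = -(N-1)$, which yields a bound on the BMO seminorm of $\log\bar u$; the John--Nirenberg inequality then produces matching $L^{p_0}$ and $L^{-p_0}$ estimates for $\bar u$. A second Moser iteration, carried out with negative exponents, converts the negative moment into $\inf_{\mathcal{B}_R}\bar u$, producing $\sup_{\mathcal{B}_R}\bar u \leq C_0\,\inf_{\mathcal{B}_R}\bar u$. Unraveling $\bar u = u + k$ gives exactly the stated inequality.

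The main obstacle is bookkeeping the forcing term $f$ through the iteration without degrading the exponents. The precise choice $k = \|f\|_{L^{N/(N-\varepsilon)}}^{1/(N-1)}$ is chosen so that after one application of Hölder with the $\varepsilon$-gain in integrability, the $f$-contribution is of exactly the same order as the main iteration term and can be absorbed, which is what keeps all constants uniform across the iteration.
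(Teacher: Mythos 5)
Your proposal is correct and takes the same route as the paper: the paper itself states Proposition~\ref{pro2.2} with only a citation to Serrin's Theorem~6 in \cite{S1964}, which is precisely what your first paragraph does, after correctly verifying via Euler's identity and zero-homogeneity of $F_\xi$ that $\mathcal{A}(\xi)=F^{N-1}(\xi)F_\xi(\xi)$ satisfies Serrin's structural conditions with $p=N$. Your subsequent Moser-iteration sketch (Caccioppoli at exponents $\gamma$, the critical test $\gamma=-(N-1)$ to get a BMO bound on $\log\bar u$, John--Nirenberg, and the normalization $\bar u = u + \|f\|^{1/(N-1)}$ to absorb the forcing) is a faithful outline of the underlying machinery, but it is supplementary to what the paper supplies.
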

\begin{lem}\label{lem3.3}
For $R>0$, $0<R_{0}<\frac{R}{4}$. Let $u$ be a weak solution of
$$-Q_{N}u =Ve^{u} \;\; \mbox{ and  }\;\; u(x)+N\ln F^{0}(x) \leq M  \quad \mbox{in}~\A_{R_0, R}$$
where $\A_{r, R} :=\{x\in\mathbb{R}^{N}: r<F^{0}(x)<R\}$ means the Wulff annulus, and $\|V\|_{L^{\infty}(\A_{R_0, R})}\leq M$ for some $M > 0$. Then there exist constants $C$ and $\alpha\in(0,1)$ depending only on $M$, $N$ and $F$, such that
$$
\sup\limits_{\p\B_r}u \leq \alpha\inf\limits_{\p\B_r}u +N(\alpha-1)\ln r+C,\quad\mbox{for all } r\in \Big[2R_{0},~\frac{R}{2}\Big].
$$
\end{lem}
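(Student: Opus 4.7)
The plan is to reduce the statement to a standard Harnack estimate for a non-negative quantity on a fixed Wulff annulus, and then transfer back by scaling. For $r \in [2R_0, R/2]$, set $\tilde u(x) := u(rx) + N\ln r$ on $\A_{1/2, 2}$; this is well defined because $rx \in \A_{R_0, R}$ for all such $x$. A direct computation using the $1$-homogeneity of $F$ shows that $\tilde u$ still solves $-Q_N \tilde u = V(rx)\, e^{\tilde u}$ on $\A_{1/2, 2}$. The hypothesis $u + N\ln F^0 \leq M$ rescales to $\tilde u(x) + N\ln F^0(x) = u(rx) + N\ln F^0(rx) \leq M$, so $\tilde u \leq M + N\ln 2$ throughout $\A_{1/2, 2}$.

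Introduce the non-negative auxiliary function $w := M + N\ln 2 - \tilde u \geq 0$. Since $F$ is even, $F_{\xi}$ is odd and hence $Q_N(-\phi) = -Q_N \phi$; combined with invariance under additive constants this gives
$$-Q_N w = Q_N \tilde u = -V(rx)\, e^{\tilde u} \quad \mbox{in } \A_{1/2, 2}.$$
The bound $V \leq M$ together with $\tilde u \leq M + N\ln 2$ then yields $\|Q_N w\|_{L^\infty(\A_{1/2, 2})} \leq M\, e^{M + N\ln 2}$, and in particular an $L^{N/(N-1)}$-bound depending only on $M, N, F$.

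Next, I apply Proposition \ref{pro2.2} along a finite chain of Wulff balls covering $\p \mathcal{B}_1$. For each $y \in \p \mathcal{B}_1$, the triangle inequality for $F^0$ yields $\mathcal{B}_{1/2}(y) \subset \A_{1/2, 2}$, so Proposition \ref{pro2.2} (with $R = 1/4$, $\varepsilon = 1$, and translation invariance of $Q_N$) gives
$$\sup_{\mathcal{B}_{1/4}(y)} w \leq C_0\Big(\min_{\mathcal{B}_{1/4}(y)} w + C(M, N, F)\Big).$$
By compactness, $\p \mathcal{B}_1$ is covered by finitely many such balls $\mathcal{B}_{1/4}(y_1), \ldots, \mathcal{B}_{1/4}(y_k)$ forming a chain of consecutive overlapping members, with $k$ depending only on $N, F$. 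Chaining these local estimates in the standard way produces a constant $C_* > 1$, depending only on $M, N, F$, such that
$$\sup_{\p \mathcal{B}_1} w \leq C_* \inf_{\p \mathcal{B}_1} w + C_*.$$
Setting $\alpha := 1/C_* \in (0,1)$ and rewriting in terms of $\tilde u$ rearranges to $\sup_{\p \mathcal{B}_1} \tilde u \leq \alpha \inf_{\p \mathcal{B}_1} \tilde u + C'$ for some $C' = C'(M, N, F)$. The identities $\sup_{\p \mathcal{B}_1} \tilde u = \sup_{\p \mathcal{B}_r} u + N\ln r$ (and similarly for the infimum) then convert this back to the announced $\sup_{\p \mathcal{B}_r} u \leq \alpha \inf_{\p \mathcal{B}_r} u + N(\alpha-1)\ln r + C$.

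The central obstacle is the reduction step. Because $u$ carries no lower bound, Proposition \ref{pro2.2} cannot be applied to $u$ directly; the trick is to exploit the one-sided hypothesis $u + N\ln F^0 \leq M$, which after rescaling becomes a uniform upper bound on $\tilde u$, to flip over to the non-negative quantity $w$. This flip is legitimate precisely because $F$ is even, so that $Q_N$ is odd. Everything downstream is the standard finite-chain Harnack procedure on Wulff balls, which works without difficulty since $F^0$ is comparable to the Euclidean norm.
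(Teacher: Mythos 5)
Your proof is correct and follows essentially the same route as the paper: rescale to $\tilde u$ on the fixed annulus $\A_{1/2,2}$, use the one-sided bound to pass to the non-negative function $w = M + N\ln 2 - \tilde u$, apply Proposition~\ref{pro2.2}, and unscale. The one place you add value is in making explicit the finite chain of Wulff balls covering $\partial\mathcal{B}_1$; the paper invokes Proposition~\ref{pro2.2} directly on $\partial\mathcal{B}_1$, leaving that (standard) covering step implicit.
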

\begin{proof}
For $r\in[2R_{0}, \frac{R}{2}]$, let $\widetilde{u}(x)=u(rx)+N\ln r$, we have
$$
-Q_{N}\widetilde{u}(x)=V(rx)e^{\widetilde{u}(x)}\;\; \mbox{and}\;\; \widetilde{u}(x) \leq M + N\ln2 \quad\mbox{in } \A_{\frac{1}{2}, 2}.
$$
 Let $g(x)= M+N\ln2-\widetilde{u}(x)\geq0$, so
$$
-Q_{N}g =Q_{N}\widetilde{u} =-V(rx)e^{\widetilde{u}}=:f\in L^{\infty}\big(\A_{\frac{1}{2}, 2}\big).
$$
It follows from Proposition \ref{pro2.2} that
$$
\sup\limits_{\p\B_1}g(x)\leq C_0\Big[\inf\limits_{\p\B_1}g(x)+\| f\|^{\frac{1}{N-1}}_{L^{\frac{N}{N-\varepsilon}}(\A_{\frac{1}{2}, 2})}\Big] \leq C_0 \inf\limits_{\p\B_1}g(x) + C.
$$
Hence
$$\sup\limits_{\p\B_1}\widetilde{u}(x)\leq\frac{1}{C_0}\inf\limits_{\p\B_1}\widetilde{u}(x)+\Big(1-\frac{1}{C_0}\Big)(M+N\ln2)+ \frac{C}{C_0}.$$
By definition of $\widetilde u$, we obtain easily the estimate on $\p\B_r$ for $u$ with $\alpha=\frac{1}{C_0}\in (0,1).$
\end{proof}

\section{Quantification of blow-up: Proof of Theorem \ref{thm1}}\label{sec4}

In order to show the quantization result, we need to clarify the situation near each blow-up point. Without loss of generality, we may assume ${\{0}\}$ is a blow-up point, and there exists $R>0$ such that $\mathcal{B}_{R}$ does not contain any other blow-up point. So we consider $\{u_{n}\}$, a sequence of weak solutions to $-Q_{N}u_{n}=V_{n}e^{u_{n}}$ in $\mathcal{B}_{R}\subset\mathbb{R}^{N}$, where
\begin{align}\label{l4.2}
0\leq V_{n}\rightarrow V \quad\mbox{in}~C(\overline{\mathcal{B}}_{R});
\end{align}
\begin{align}\label{l4.3}
\max_{\overline{\mathcal{B}}_{R}}u_{n}\rightarrow\infty, \quad \mbox{and}\quad
\max_{\overline{\mathcal{B}}_{R}\backslash \mathcal{B}_{r}}u_{n}\rightarrow-\infty, \;\;\mbox{for any}~ r\in(0,R).
\end{align}
Moreover we assume that there is $C >0$ such that
\begin{align}\label{l4.6}
\int_{\mathcal{B}_{R}}e^{u_{n}}dx\leq C.
\end{align}
To describe the blow-up behavior of $\{u_{n}\}$ near the origin, we will prove the following proposition.
\begin{prop}\label{pro3.1}
Let $\{u_{n}\}$ be as above, then up to a subsequence, we have $m$ sequences $\{p_{n, j}\}_{1\le j \le m}$ in $\mathcal{B}_{R}$ and $m$ sequences of $\{k_{n, j}\}_{1\le j \le m} \in (0, \infty)$ with $\lim\limits_{n\rightarrow\infty}p_{n, j}=0$, $\lim\limits_{n\rightarrow\infty}k_{n, j}=\infty$, such that
\begin{align}\label{l4.2.1}
u_{n}(p_{n, j})=\max_{\overline{\mathcal B}_{2k_{n, j}\delta_{n, j}}(p_{n, j})}u_{n}\rightarrow\infty, \quad\forall~ 1\leq j\leq m,
\end{align}
where $\delta_{n, j}=e^{-\frac{u_{n}(p_{n, j})}{N}}$,
\begin{align}\label{l4.2.2}
\mathcal{B}_{2k_{n,i}\delta_{n,i}}(p_{n,i})\cap \mathcal{B}_{2k_{n, j}\delta_{n, j}}(p_{n, j})=\emptyset, \quad\forall ~1\leq i \ne j\leq m,
\end{align}
\begin{align}\label{l4.2.3}
\frac{\partial}{\partial t}u_{n}(ty+p_{n, j})|_{t=1}<0, \quad \forall ~\delta_{n, j}\leq F^{0}(y)\leq 2k_{n, j}\delta_{n, j},\; 1\leq j\leq m,
\end{align}
\begin{align}\label{l4.2.4}
\lim_{n\rightarrow\infty}\int_{\mathcal{B}_{2k_{n, j}\delta_{n, j}}(p_{n, j})}V_{n}e^{u_{n}}dx=\lim_{n\rightarrow\infty}\int_{\mathcal{B}_{k_{n, j}\delta_{n, j}}(p_{n, j})}V_{n}e^{u_{n}}dx=\mathcal{C}_{N}\kappa, \quad 1\leq j\leq m,
\end{align}
\begin{align}\label{l4.2.5}
u_{n}(x)+N\ln\Big(\min_{1\leq j\leq m}F^{0}(x-p_{n, j})\Big)\leq C' < \infty, \quad \forall ~n \in \N,\; x \in \mathcal{B}_R.
\end{align}
Then $V(0) > 0$, and in the sense of measure,
 $$V_{n}e^{u_{n}}\rightharpoonup m\mathcal{C}_{N}\kappa\delta_{0}, \;\; \mbox{i.e. } \lim_{n\to\infty}\int_{\mathcal{B}_{R}}V_{n}e^{u_{n}}dx = m\mathcal{C}_{N}\kappa.$$
\end{prop}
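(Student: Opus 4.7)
The plan proceeds in three steps.

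\smallskip
\textbf{Step 1 ($V(0)>0$).} Rescale at the first bubble: set $\tilde u_n(y):=u_n(p_{n,1}+\delta_{n,1}y)+N\ln\delta_{n,1}$. By \eqref{l4.2.1}, $\tilde u_n(0)=0=\max_{\overline{\mathcal{B}}_{2k_{n,1}}}\tilde u_n$, and $\tilde u_n$ solves $-Q_N\tilde u_n=V_n(p_{n,1}+\delta_{n,1}y)\,e^{\tilde u_n}$ with the right-hand side uniformly bounded. The $C^{1,\beta}_{loc}$ regularity recalled at the start of Section \ref{sec3} yields $\tilde u_n\to\tilde u$ in $C^1_{loc}(\mathbb{R}^N)$ along a subsequence. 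If $V(0)=0$ then $V_n(p_{n,1}+\delta_{n,1}y)\to 0$ uniformly on compacta, so the limit satisfies $-Q_N\tilde u=0$ with $\tilde u(0)=\sup\tilde u=0$; the strong maximum principle forces $\tilde u\equiv 0$. Passing to the limit in $\int_{\mathcal{B}_R}e^{\tilde u_n}\,dy=\int_{\mathcal{B}_{R\delta_{n,1}}(p_{n,1})}e^{u_n}\,dx\le M$ (using \eqref{l4.6} and $\mathcal{B}_{R\delta_{n,1}}(p_{n,1})\subset\mathcal{B}_R$ for large $n$) gives $\kappa R^N\le M$ for every $R>0$, an absurdity. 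Hence $V(0)>0$.

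\smallskip
\textbf{Step 2 (Bubble contribution).} With $V(0)>0$, the limit $\tilde u$ is a finite-energy solution of \eqref{bubble}, and the Ciraolo-Li classification \cite{CL2023} forces $V(0)\int_{\mathbb{R}^N}e^{\tilde u}=\mathcal{C}_N\kappa$, in agreement with \eqref{l4.2.4}. Since the $m$ bubble balls $\mathcal{B}_{2k_{n,j}\delta_{n,j}}(p_{n,j})$ are pairwise disjoint by \eqref{l4.2.2}, they contribute a total mass $m\,\mathcal{C}_N\kappa+o(1)$ to $\int_{\mathcal{B}_R}V_ne^{u_n}\,dx$.

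\smallskip
\textbf{Step 3 (Neck mass vanishes).} Set $\Omega_n:=\mathcal{B}_R\setminus\bigcup_{j=1}^m\mathcal{B}_{2k_{n,j}\delta_{n,j}}(p_{n,j})$; we must show $\int_{\Omega_n}V_ne^{u_n}\,dx\to 0$. For any fixed $r_0\in(0,R)$, \eqref{l4.3} yields uniform decay $u_n\to-\infty$ on $\overline{\mathcal{B}}_R\setminus\mathcal{B}_{r_0}$, so the mass over $\Omega_n\setminus\mathcal{B}_{r_0}$ is $o(1)$. For the inner part $\Omega_n\cap\mathcal{B}_{r_0}$, partition into Voronoi-type cells $V_{n,j}:=\{x:F^0(x-p_{n,j})=\min_k F^0(x-p_{n,k})\}$; on $V_{n,j}$ the bound \eqref{l4.2.5} reduces to $u_n(x)+N\ln F^0(x-p_{n,j})\le C'$. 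Using $F^0(p_{n,k}-p_{n,j})\to 0$ to extend this bound to entire Wulff annuli $\mathcal{A}_{s,2s}(p_{n,j})$ centered at $p_{n,j}$, Lemma \ref{lem3.3} applies for $s\in[2k_{n,j}\delta_{n,j},r_0]$. The plan is then to iterate Lemma \ref{lem3.3} along dyadic Wulff annuli $\mathcal{A}_{2^\ell k_{n,j}\delta_{n,j},\,2^{\ell+1}k_{n,j}\delta_{n,j}}(p_{n,j})$, initialized at $\ell=0$ by the $C^1_{loc}$ convergence of $\tilde u_n$ to the Ciraolo-Li bubble on $\partial\mathcal{B}_R$ for $R$ large, to derive the refined decay
\[
u_n(x)\le \frac{N}{N-1}\ln\delta_{n,j}-\frac{N^2}{N-1}\ln F^0(x-p_{n,j})+C\quad\text{on }V_{n,j}.
\]
Since $\frac{N^2}{N-1}>N$, a direct computation yields $\int_{V_{n,j}}V_ne^{u_n}\,dx\lesssim \delta_{n,j}^{N/(N-1)}\!\int_{2k_{n,j}\delta_{n,j}}^{2r_0}\!s^{(1-2N)/(N-1)}s^{N-1}ds\lesssim k_{n,j}^{-N/(N-1)}\to 0$, completing the proof.

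\smallskip
The principal obstacle is Step 3: upgrading the exponent in the pointwise bound from $N$ in \eqref{l4.2.5} (which only gives a logarithmically bounded neck mass) to the sharp $N^2/(N-1)$ dictated by the bubble tail. Lemma \ref{lem3.3} improves pointwise data by a factor $\alpha<1$ per dyadic step, so the compounding across $\sim|\ln(k_{n,j}\delta_{n,j})|$ scales must be controlled without the additive constants blowing up, and the Voronoi boundaries between cells (which are not Wulff spheres) must be handled. A cleaner alternative, which I would pursue in parallel, is to use the explicit Ciraolo-Li bubble \eqref{bubble1} itself as an upper barrier via the comparison principle \cite[Theorem 4.2]{FK2009}, reducing the whole neck estimate to a boundary comparison on $\partial V_{n,j}$.
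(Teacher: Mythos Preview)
Your proposal has two substantial gaps.

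\medskip
\textbf{Missing construction of the bubbles.} The proposition asserts the \emph{existence} of the sequences $\{p_{n,j}\}$, $\{k_{n,j}\}$ satisfying \eqref{l4.2.1}--\eqref{l4.2.5}; it is not a statement of the form ``given such sequences, the mass is $m\mathcal{C}_N\kappa$.'' The paper devotes its Step~4 to this construction: after extracting the first bubble at the global maximum, one looks at $\max_x\big[u_n(x)+N\ln\min_{j\le\ell-1}F^0(x-p_{n,j})\big]$; if this stays bounded one stops, otherwise the maximizer yields a new rescaling center $p_{n,\ell}$, and the disjointness \eqref{l4.2.2} and radial decrease \eqref{l4.2.3} come from the $C^1$ convergence to the Ciraolo--Li profile. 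The iteration terminates by \eqref{l4.6}. You skip this entirely, so half of the proposition is unproved.

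\medskip
\textbf{The neck estimate (your Step 3) does not work as written.} Your plan to ``extend'' the bound \eqref{l4.2.5} from Voronoi cells to full Wulff annuli $\mathcal{A}_{s,2s}(p_{n,j})$ is precisely what fails: such an annulus will in general contain another bubble center $p_{n,k}$, near which $u_n\to+\infty$, so the hypothesis of Lemma~\ref{lem3.3} is violated and no Harnack-type bound is available there. The barrier alternative via the single Ciraolo--Li profile fails for the same reason. Moreover, the decay you are after does not come from iterating Lemma~\ref{lem3.3}; the paper instead combines a \emph{single} application of Lemma~\ref{lem3.3} with the $\sup+\inf$ inequality (Corollary~\ref{cor3.1}) to get, in the one-bubble case,
\[
e^{u_n(x)} \le C\,\delta_n^{N\alpha/C_1}\,F^0(x-p_n)^{-N(1+\alpha/C_1)},
\]
whose exponent exceeds $N$ and already makes the neck integral $O(k_n^{-N\alpha/C_1})\to 0$; the sharp exponent $N^2/(N-1)$ is never needed. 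For $m\ge 2$ bubbles the paper does \emph{not} treat each cell separately. It argues by induction on $m$ (Lemma~\ref{lem3.2}): rescale by the minimal inter-bubble distance $d_n=\min_{i\ne j}F^0(p_{n,i}-p_{n,j})$, which either puts all bubbles at mutually bounded distance (and one applies the one-bubble neck estimate around each limit point, then again outside $\mathcal{B}_{4Ad_n}$), or splits off a strictly smaller cluster to which the induction hypothesis applies. This clustering-by-scale is the missing idea that your Voronoi scheme cannot replace.
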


\begin{proof}
We will divide the proof into five steps. In the following, many claims hold up to a subsequence and we are interested in the asymptotic behavior of sequences of blow-up solutions. To simplify the writing, we will not repeat always ``{\it up to subsequence}'' or ``{\it for $n$ large enough}'' and we denote the subsequence always by $u_n$. Seeing \eqref{l4.6}, we assume that
$$\beta=\lim_{n\rightarrow\infty}\int_{\mathcal{B}_{R}}V_{n}e^{u_{n}}dx.$$

\textbf{Step 1. $V(0)>0$ and $\beta\geq\mathcal{C}_{N}\kappa$.}

Let $p_n\in \mathcal{B}_{R}$ satisfy $u_{n}(p_n)=\max\limits_{\overline{\mathcal{B}}_{R}}u_{n}(x)$, by (\ref{l4.3}), we have $p_n\rightarrow0$ and $u_{n}(p_n)\rightarrow\infty$. Hence $\delta_{n}=e^{-\frac{u_{n}(p_n)}{N}}$ tends to $0$.

For $F^{0}(x)\leq \frac{R}{2\delta_n}$, consider $\widetilde{u}_{n}(x)=u_{n}(\delta_{n}x+p_n)+N\ln\delta_{n}$ for $n$ large, then
$$
-Q_{N}\widetilde{u}_{n} = V_{n}(\delta_{n}x+p_n)e^{\widetilde{u}_{n}}, \; \;
\widetilde{u}_{n}\leq \widetilde{u}_{n}(0) = 0 \quad \mbox{in }  \mathcal{B}_{\frac{R}{2\delta_n}}$$
and
$$\int_{\mathcal{B}_{\frac{R}{2\delta_n}}}e^{\widetilde{u}_{n}}dx\leq C, \quad \forall \; n.$$
By Theorem B, only the case $(i)$ may occur for $\widetilde u_n$. Applying the elliptic estimates \cite{D1983}, up to a diagonal procedure,  $\{\widetilde{u}_{n}\}$ converges to $\widetilde{u}$ in $C^{1, \alpha}_{loc}(\mathbb{R}^{N})$ ($0<\alpha<1$), which satisfies $e^{\widetilde{u}} \in L^1(\R^N)$, and
$$ -Q_{N}\widetilde{u} =V(0)e^{\widetilde{u}},  \;\;  \widetilde{u} \leq \widetilde{u}(0) = 0  \quad \mbox{in }\mathbb{R}^{N}.$$

If $V(0)=0$, then $-Q_{N}\widetilde{u}_{n} = 0$ in $\mathbb{R}^{N}$. As $\widetilde{u}\leq \widetilde{u}(0) = 0$, the Liouville theorem of Finsler $N$-harmonic function (see \cite{HKM1993}) implies that $\widetilde{u} \equiv 0$, which contradicts with $e^{\widetilde{u}} \in L^1(\R^N)$, so $V(0)>0$.

Thanks to the classification result in \cite{CL2023}, there are $p \in \R^N$ and $\lambda > 0$ satisfying
\begin{align}\label{class}
\widetilde{u}(x)=\ln\frac{\mathcal{C}_{N}\lambda^{N}}{V(0)\left[1+\lambda^{\frac{N}{N-1}}\left(F^{0}(x-p)\right)^{\frac{N}{N-1}}\right]^{N}}.
\end{align}
Recall that $\widetilde{u} \leq \widetilde{u}(0)=0$ in $\mathbb{R}^{N}$, we have
$$p =0,\quad \lambda=\left(\frac{V(0)}{\mathcal{C}_{N}}\right)^{\frac{1}{N}} \quad\mbox{and}\quad V(0)\int_{\mathbb{R}^{N}}e^{\widetilde{u}(x)}dx=\mathcal{C}_{N}\kappa.$$
Given every $r>0$, we have
$$\beta=\lim\limits_{n\rightarrow\infty}\int_{\mathcal{B}_{R}}V_{n}e^{u_{n}}dx\geq\lim\limits_{n\rightarrow\infty}\int_{\mathcal{B}_{r\delta_{n}}(p_n)}V_{n}e^{u_{n}}dx=V(0)\int_{\mathcal{B}_{r}}e^{\widetilde{u}}dx.$$
Therefore, let $r\rightarrow\infty$, there holds $\beta\geq \mathcal{C}_{N}\kappa$.

\medskip
\textbf{Step 2. Quantification of single bubble case: $m=1$}

By Step 1, $V(0) > 0$. Using \eqref{l4.2}, we have $R_1 \in (0, R)$ such that
\begin{align}\label{l4.4.2}
0 < a\leq V_{n}(x)\leq b, \quad\forall ~x\in \overline{\mathcal{B}}_{R_1},\; n~\mbox{large enough}.
\end{align}
Let $p_n, \delta_n$ be that in Step 1, we may assume $p_n \in \mathcal{B}_{R_1}$ and $u_{n}(p_n)=\max_{\overline{\mathcal{B}}_{R_1}}u_{n}$.
Suppose now
\begin{align}
\label{3.8new}
u(x) + N\ln F^0(x - p_n) \leq C < \infty, \quad \forall\; n \in \N, x\in \overline{\mathcal{B}}_{R_1}.
\end{align}
We will show that $\beta = {\mathcal C}_N\kappa$.

Let $\widetilde{u}_{n}$ and $\widetilde u$ be that in Step 1. As $\widetilde{u}_{n} \rightarrow \widetilde u$ in $C^1_{loc}(\R^N)$, up to a diagonal process, there exists a sequence $k_n \to +\infty$ such that
\begin{align}
\label{3.7new}
\lim_{n\rightarrow\infty}\int_{\mathcal{B}_{k_{n}\delta_{n}}(p_{n})}V_{n}e^{u_{n}}dx = \lim_{n\rightarrow\infty}\int_{\mathcal{B}_{k_{n}}}V_{n}(\delta_n x + p_n)e^{\widetilde u_{n}}dx = \mathcal{C}_{N}\kappa .
\end{align}
Set $r_{n} =k_{n}\delta_{n}$. Up to a subsequence, recalling that $p_n \to 0$, we have
\begin{itemize}
\item[(1)] either there is $d > 0$ satisfying $r_n - F^0(p_n) \geq d$ for $n$ large enough;
\item[(2)] or $\lim_{n\to\infty} r_n = 0$.
\end{itemize}

In case (1), using (\ref{l4.3}), we have
$$\int_{\mathcal{B}_R\backslash {\mathcal B}_{r_n}(p_n)}V_{n}e^{u_{n}}dx \leq b\int_{\mathcal{A}_{d, R}}e^{u_{n}}dx\rightarrow 0 \quad\mbox{as}~ n\rightarrow\infty,$$
hence
$$\lim_{n\to\infty}\int_{\mathcal{B}_{R}}V_{n}e^{u_{n}}dx = \lim_{n\to\infty}\int_{\mathcal{B}_{r_{n}}(p_n)}V_{n}e^{u_{n}}dx = \mathcal{C}_{N}\kappa.$$

Suppose now we are in case (2). Thanks to \eqref{3.8new}, we can apply Lemma \ref{lem3.3} with $u_n(y+p_n)$ for $n$ large, and get
\begin{align}\label{l4.3.1}
\sup\limits_{\partial \mathcal{B}_r}u(x)\leq\alpha\inf\limits_{\partial \mathcal{B}_r}u(x)+N(\alpha-1)\ln r+C, \quad\forall~0 < r\leq\frac{R_{1}}{4}.
\end{align}
Here $r = F^0(x-p_n)$. Combining \eqref{l4.3.1} with Corollary \ref{cor3.1}, there holds
$$\sup\limits_{\partial \mathcal{B}_{r}}u_{n} \leq C -\frac{\alpha}{C_{1}}u_{n}(p_n)-N\left(1+\frac{\alpha}{C_{1}}\right)\ln r, \quad\forall~ 0 < r\leq\frac{R_{1}}{4}.$$
Hence
$$e^{u_{n}(x)}\leq C\delta_{n}^{\frac{N\alpha}{C_{1}}} F^{0}(x-p_n)^{-N\left(\frac{\alpha}{C_{1}}+1\right)},\quad\forall~ 0 < r\leq\frac{R_{1}}{4}.$$
We obtain then
\begin{align}\label{l4.3.3}
\int_{\mathcal{B}_{\frac{R_{1}}{4}}\backslash \mathcal{B}_{r_{n}}(p_n)}V_{n}e^{u_{n}}dx\leq C\delta_{n}^{\frac{N\alpha}{C_{1}}}\int_{r_{n}}^{\infty}r^{-\frac{N\alpha}{C_{1}}-1}dr=C k_n^{-\frac{N\alpha}{C_{1}}}\longrightarrow 0 \quad\mbox{as } n\rightarrow\infty.
\end{align}
By (\ref{l4.3}), (\ref{3.7new}) and (\ref{l4.3.3}), we obtain that $\beta = \mathcal{C}_{N}\kappa.$ with \eqref{3.8new}.

Moreover, we notice that \eqref{3.8new} is indeed equivalent to \eqref{l4.2.5} with $m = 1$ and $p_{n, 1} = p_n$. So the proof of Proposition \ref{pro3.1} when $m=1$ is completed since \eqref{l4.2.1}-\eqref{l4.2.4} are readily valid.

\medskip
\textbf{Step 3. Equivalence between \eqref{th6} and $m=1$.}

Assume that \eqref{th6} holds true. Without loss of generality, let $a_1 = 0$. Let $p_n$ be that in Steps 1-2, we need only to check \eqref{l4.2.5} with $m=1$ or equivalently \eqref{3.8new}. Using (\ref{th6}), there holds
\begin{align}\label{l4.4.3}
F^{0}(p_n)\leq e^{\frac{C_{1}-u_n(p_n)}{N}} = e^{\frac{C_{1}}{N}}\delta_{n}.
\end{align}
\begin{itemize}
\item[--] Let first $F^0(x - p_n) \le 2e^\frac{C_1}{N}\delta_n$, then
\begin{align*}
u_n(x) + N\ln F^0(x - p_n) &\leq u_n(x) + N\ln \delta_n + C_1 + N\ln 2\\
 & = u_n(x) - u_n(p_n) + C_1 + N\ln 2 \\
& \le C_1+ N\ln 2.
\end{align*}
\item[--] Let now $F^0(x - p_n) \ge 2e^\frac{C_1}{N}\delta_n$, by \eqref{l4.4.3}
$$F^0(x) \ge F^0(x - p_n) - F^0(p_n) \ge F^0(x - p_n) - e^\frac{C_1}{N}\delta_n \ge \frac{F^0(x - p_n)}{2},$$
therefore
$$u_n(x) + N\ln F^0(x - p_n) \le u_n(x) + N\ln F^0(x) + \ln 2 \le M'+\ln 2.$$
\end{itemize}
So \eqref{3.8new} holds true, and $m = 1$ by Step 2.

Very similarly, we can also prove that \eqref{3.8new} implies \eqref{th6}, we omit the details.

\medskip
\textbf{Step 4. Catch of multi-bubbles.}

We begin still with $p_n$, $\delta_n$ and $\widetilde u_n$ in Step 1. As $\widetilde{u}_{n}$ converges in $C^{1}_{loc}(\mathbb{R}^{N})$ to $\widetilde{u}$ given by \eqref{class}, we can select $k_{n}\rightarrow\infty$, such that $\| \widetilde{u}_{n}^{}-\widetilde{u}\|_{C^{1}(\overline{\mathcal{B}}_{2k_{n}})}\rightarrow0,$ and
$$\lim_{n\to\infty}\int_{\mathcal{B}_{2k_{n}\delta_{n}}(p_n)}V_{n}e^{u_{n}}dx =\lim_{n\to\infty}\int_{\mathcal{B}_{k_{n}\delta_{n}}(p_n)}V_{n}e^{u_{n}}dx = \mathcal{C}_{N}\kappa.$$
Moreover, as $\widetilde{u}$ is decreasing with respect to $F^0(x)$, we may claim that
\begin{align*}
\frac{\partial}{\partial t}u_{n}(ty+p_n)|_{t=1}<0,\quad\forall~\delta_{n}\leq F^{0}(y)\leq2k_{n}\delta_{n}, \; n\in \N.
\end{align*}
Clearly $p_{n, 1} = p_n$, $\delta_{n, 1} = \delta_n$ and $k_{n, 1} = k_n$ satisfy (\ref{l4.2.1}), (\ref{l4.2.3}) and (\ref{l4.2.4}) with $m=1$.

Now we will catch eventually other bubbles by induction. Suppose that we have already $\ell-1$ sequences $\{p_{n, j}\}_{1\le j \le \ell-1}$, $\{k_{n, j}\}_{1\le j \le \ell-1}$ with $\ell\geq 2$ satisfying (\ref{l4.2.1})-(\ref{l4.2.4}) with $m=\ell-1$. If \eqref{l4.2.5} holds with $m=\ell-1$, we stop the process. Otherwise, let
\begin{align*}
M_{n}: = u_{n}(\overline{p}_{n, \ell})+N\ln G(\overline{p}_{n, \ell}) = \max_{x\in\overline{ \mathcal{B}}_{R}}\big[u_{n}(x)+N\ln G(x)\big]
\end{align*}
where
\begin{align*}
G(x) := \min_{1 \leq j \leq \ell-1}F^{0}(x-p_{n, j}).
\end{align*}
Then $M_{n}\rightarrow\infty$, in particular $u_{n}(\overline{p}_{n, \ell})\rightarrow\infty$. Denoting $\overline{\delta}_{n, \ell}=e^{-\frac{u_{n}(\overline{p}_{n, \ell})}{N}}$,  we have
$$\xi_n := \frac{G(\overline{p}_{n, \ell})}{\overline{\delta}_{n, \ell}}\rightarrow\infty.$$
By triangle inequality,
 \begin{align*}
G(\overline{p}_{n, \ell}+\overline{\delta}_{n, \ell}x)\geq G(\overline{p}_{n, \ell}) -\overline{\delta}_{n, \ell}F^{0}(x) \geq \frac{G(\overline{p}_{n, \ell})}{2}, \quad \forall\; F^{0}(x)\leq\frac{\xi_n}{2}.
 \end{align*}
 Set $\overline{u}_{n}(x)=u_{n}(\overline{p}_{n, \ell}+\overline{\delta}_{n, \ell}x)+N\ln\overline{\delta}_{n, \ell}$. If $F^{0}(x)\leq \frac{\xi_n}{2}$, there holds
 \begin{align*}
 \overline{u}_{n}(x) &=u_{n}(\overline{p}_{n, \ell}+\overline{\delta}_{n, \ell}x)+N\ln G(\overline{p}_{n, \ell}+\overline{\delta}_{n, \ell}x) + N\ln\overline{\delta}_{n, \ell} - N\ln G(\overline{p}_{n, \ell}+\overline{\delta}_{n, \ell}x)\\
 &\leq u_{n}(\overline{p}_{n, \ell})+N\ln G(\overline{p}_{n, \ell})+ N\ln\overline{\delta}_{n, \ell} - N\ln G(\overline{p}_{n, \ell}+\overline{\delta}_{n, \ell}x)\\
 &= N\ln\frac{G(\overline{p}_{n, \ell})}{G(\overline{p}_{n, \ell}+\overline{\delta}_{n, \ell}x)}\\
 &\leq N\ln2.
 \end{align*}
So $\overline{u}_{n}(0) =0$, and
$$-Q_{N}\overline{u}_{n} = V_{n}(\bar{\delta}_{n, \ell}x+\bar{p}_{n, \ell})e^{\overline{u}_{n}},  \;\;
\overline{u}_{n} \leq N\ln2,  \quad   \mbox{in }\;  {\mathcal B}_\frac{\xi_n}{2}.$$
Then up to subsequence, $\overline{u}_{n}\rightarrow\overline{u}$ in $C_{loc}^{1, \alpha}(\mathbb{R}^{N})$, where
$$\overline{u}(x) =\ln\frac{\mu^{N}}{\left(1+\gamma\mu^{\frac{N}{N-1}}F^{0}(x-\overline{p})^{\frac{N}{N-1}}\right)^{N}}$$
with $\gamma ^{N-1} = \frac{V(0)}{\mathcal{C}_{N}}$, $\mu>0$ and $\overline{p}\in \mathbb{R}^{N}$.
We can choose a sequence $k_{n, \ell}\rightarrow\infty$ such that
\begin{align*}
\|\overline{u}_{n}-\overline{u}\|_{C^{1, \beta}(\overline{\mathcal{B}}_{2k_{n, \ell}})}\rightarrow 0.
\end{align*}
Therefore for $n$ large enough
\begin{align}
\label{l4.2.10}
\frac{\partial}{\partial t}\overline{u}_{n}(ty+\overline{p})|_{t=1}<0, \quad 1 \leq F^{0}(y)\leq 2k_{n}^{l}.
\end{align}

Let $y_{n, \ell}\in \mathcal{B}_1$ satisfy
\begin{align*}
\overline{u}_{n}(y_{n, \ell}+\overline{p})=\max\limits_{\overline{\mathcal{B}}_{4k_{n, \ell}}}\overline{u}_{n}(y+\overline{p}),
\end{align*}
and we choose $p_{n, \ell}=\overline{p}_{n, \ell}+\overline{\delta}_{n, \ell}(y_{n, \ell}+\overline{p})$. By the definition of $y_{n,\ell}$ and the expression of $\overline u$, there holds $y_{n, \ell}\rightarrow0$ and
\begin{align}\label{l4.2.12}
u_{n}(\overline{p}_{n, \ell})\leq u_{n}(p_{n, \ell}) \leq u_{n}(\overline{p}_{n, \ell}) + C,
\end{align}
because $\lim_{n\to\infty}\big[u_{n}(\overline{p}_{n, \ell}) - u_{n}(p_{n, \ell})] = \overline u(\overline p)-\overline u(0)$. Denote
$$\delta_{n, \ell}=e^{-\frac{u_{n}(p_{n, \ell})}{N}}, \quad \widetilde u_{n, \ell}(x) = u_n(\delta_{n, \ell}x + p_{n, \ell}) + N\ln\delta_{n, \ell}.$$
By \eqref{l4.2.12}, there holds
\begin{align*}
\delta_{n, \ell}\leq\overline{\delta}_{n, \ell}\leq C\delta_{n, \ell}.
\end{align*}
Up to a subsequence, we assume that
$$\lim_{n\to\infty} \frac{\delta_{n, \ell}}{\overline \delta_{n, \ell}} = \mu \in (0, 1]$$
Readily $\widetilde u_{n, \ell} \to \overline u(\mu x + \overline p) + N\ln\mu$ in $C^1_{loc}(\R^N)$. It is easy to see that (\ref{l4.2.1})-(\ref{l4.2.4}) are satisfied for $\{p_{n, j}\}_{1\leq j \le \ell}$ and $\{k_{n, j}\}_{1\le j \le \ell}$ with $m= \ell$. For example, \eqref{l4.2.2} is valid seeing the induction hypothesis and \eqref{l4.2.10}.

We continue this process until (\ref{l4.2.5}) holds true. In fact we will stop after a finite number of iterations since each time we add a mass of $\mathcal{C}_N\kappa$ near the blow-up point $p_{n, j} \to 0$, it follows then $m\leq \frac{C V(0)}{\mathcal{C}_N \kappa}$.

\medskip
\textbf{Step 5. No neck contribution.}

The last step is to show that there is no mass contribution out of the $m$ bubbles provided by Step 4. Here we will use the following lemma  which arrives at a slightly more general situation.
\begin{lem}\label{lem3.2}
Let $\{V_n\}$  be a sequence of functions satisfying \eqref{l4.2}. Assume that $\{u_{n}\}$ is a sequence of solutions of \eqref{main eq1} satisfying \eqref{l4.3}, \eqref{l4.6} and \eqref{l4.2.1} on $\mathcal{B}_{R}$. Let $\{p_{n, j}\}_{1\le j \le m}$ be $m$ sequences in $\mathcal{B}_{R}$ and $\{k_{n, j}\}_{1\le j \le m} \in (0, \infty)$ be $m$ sequences of positive numbers with $\lim\limits_{n\rightarrow\infty}p_{n, j}=0$, $\lim\limits_{n\rightarrow\infty}k_{n, j}=\infty$, which satisfy \eqref{l4.2.2} and
\begin{align}\label{l4.2.4bis}
\lim_{n\rightarrow\infty}\int_{\mathcal{B}_{2k_{n, j}\delta_{n, j}}(p_{n, j})}V_{n}e^{u_{n}}dx=\lim_{n\rightarrow\infty}\int_{\mathcal{B}_{k_{n, j}\delta_{n, j}}(p_{n, j})}V_{n}e^{u_{n}}dx=\beta_j, \quad 1\leq j\leq m,
\end{align}
\begin{align}\label{l4.2.5bis}
u_{n}(x)+N\ln\Big[\min_{1\leq j\leq m}F^{0}(x-p_{n, j})\Big]\leq C' < \infty, \quad \forall ~n \in \N,\; x \in \mathcal{B}_R.
\end{align}
Then
 $$ \lim_{n\to\infty}\int_{\mathcal{B}_{R}}V_{n}e^{u_{n}}dx = \sum_{j=1}^{m}\beta_j.$$
\end{lem}

\noindent
{\sl Proof of Lemma \ref{lem3.2}.} Recall that $V(0)>0$ and \eqref{l4.4.2} holds true. We will proceed by induction. By analysis as in Step 2, we can show that the result holds true for $m=1$. Since the arguments are very similar, so we omit the details.

Assume that the result holds for $m \leq \ell-1$ with $\ell \ge 2$, and we consider now $m =\ell$. Without loss of generality, assume that $p_{n, 1} = 0$ for all $n$ and
\begin{align}
\label{dn}
d_{n}= \min_{1\leq i\ne j\leq \ell} F^{0}(p_{n,i}-p_{n, j}) = F^{0}(p_{n,1}-p_{n, 2}).
\end{align}

{\sl Case 1}. Suppose first that there is $A > 0$ satisfying
\begin{align}\label{l4.3.4}
F^{0}(p_{n,i}-p_{n, j})\leq Ad_{n}, \quad 1\leq i,j\leq \ell, \; n \in \N.
\end{align}
We claim then
\begin{align}\label{l4.3.5}
\lim\limits_{n\rightarrow\infty}\int_{\mathcal{B}_{4Ad_{n}}}V_{n}e^{u_{n}}dx=\lim\limits_{n\rightarrow\infty}\int_{\mathcal{B}_{2Ad_{n}}}V_{n}e^{u_{n}}dx=\sum_{j=1}^m \beta_j.
\end{align}
Once (\ref{l4.3.5}) is established, we can apply the similar method in Step 2 for the case $m=1$ to conclude that $\beta = \sum_{1\le j \le m} \beta_j.$

Thus, it remains to prove (\ref{l4.3.5}). Set $r_{n,j}=k_{n, j}\delta_{n, j}$, consider
$$\widetilde{u}_{n}(x)=u_{n}(d_{n}x)+N\ln d_{n} \;\; \mbox{and} \;\; \widetilde{V}_{n}(x)=V_{n}(d_{n}x), \quad \mbox{for } F^{0}(x)\leq\frac{R}{d_{n}}.$$
Denote also $\widetilde{x}_{n, j}=\frac{p_{n, j}}{d_{n}}$, $\widetilde{\delta}_{n, j}=e^{-\frac{\widetilde{u}_{n}(\widetilde{x}_{n, j})}{N}}=\frac{\delta_{n, j}}{d_{n}}$ and $\widetilde{r}_{n, j}=\frac{r_{n, j}}{d_{n}}$ for $1\leq j\leq \ell$. Using \eqref{l4.2.1}, \eqref{l4.2.2} and \eqref{l4.2.5bis}, we have that $\forall\; 1\le j \le \ell$, $\lim_{n\to\infty} \widetilde{x}_{n,j}=0,$
$\lim_{n\to\infty} \widetilde{r}_{n, j}/\widetilde{\delta}_{n, j} =\infty$;
$$\mathcal{B}_{\widetilde{r}_{n,i}}(\widetilde{x}_{n,i})\cap \mathcal{B}_{\widetilde{r}_{n, j}}(\widetilde{x}_{n, j})=\emptyset,\quad\forall~ 1\leq i\ne j\leq \ell, \; n \in \N;$$
$$\widetilde{u}_{n}(x)+N\ln\Big[\min_{1\leq j\leq \ell}F^{0}(x-\widetilde{x}_{n, j})\Big] \leq C,\quad\forall ~ x\in\overline{\mathcal{B}}_\frac{R}{d_{n}}, \; n\in \N;$$
$$\lim\limits_{n\rightarrow\infty}\int_{\mathcal{B}_{2\widetilde{r}_{n, j}}(\widetilde{x}_{n, j})}\widetilde{V}_{n}e^{\widetilde{u}_{n}}dx=\lim\limits_{n\rightarrow\infty}
\int_{\mathcal{B}_{\widetilde{r}_{n, j}}(\widetilde{x}_{n, j})}\widetilde{V}_{n}e^{\widetilde{u}_{n}}dx=\beta_j, \quad\forall~ 1\leq j\leq \ell.$$
Furthermore, \eqref{l4.2.2} guarantees that $u_n(p_{n, j}) + N\ln F^0(p_{n,i}-p_{n,j}) \to \infty$ for any $1\leq i\ne j\leq \ell$, so $\widetilde u_n(\widetilde x_{n, j})$ tends to $\infty$ by \eqref{l4.3.4}.

Since $F^{0}(\widetilde{x}_{n, j})\leq A$ for all $n \in \N$ and $1\leq j\leq \ell$, up to a subsequence, there holds
\begin{align*}
\lim_{n\to\infty} \widetilde{x}_{n, j} = q_j, \quad\mbox{for }\; 1\leq j\leq \ell.
\end{align*}
Applying again Theorem B, we see that
\begin{align}\label{l4.3.7}
\widetilde{u}_{n}\rightarrow-\infty \mbox{ uniformly on compact subsets of } \mathbb{R}^{N}\backslash \{q_j, 1 \le j \le \ell\}.
\end{align}
Obviously,
\begin{align*}
1\leq F^{0}(q_i - q_j)\leq A, \quad\forall~ 1\leq i\ne j\leq \ell.
\end{align*}

For each $ 1\leq j\leq \ell$, either $\liminf_{n\rightarrow\infty}\widetilde{r}_{n,j}>0$, we get by (\ref{l4.3.7})
\begin{align}\label{l4.3.9}
\int_{\mathcal{B}_{\frac{1}{2}}(q_j)}\widetilde{V}_{n}e^{\widetilde{u}_{n}}dx\rightarrow \beta_j;
\end{align}
or $\lim_{n\rightarrow\infty}\widetilde{r}_{n, j}=0$, then (\ref{l4.3.9}) still holds by similar estimate in Step 2. Finally we conclude \eqref{l4.3.5} using (\ref{l4.3.7}) and (\ref{l4.3.9}).

\medskip
{\sl Case 2}. Suppose now \eqref{l4.3.4} is not valid, then (up to a subsequence)
$$J = \Big\{1\le j \le \ell, \lim_{n\rightarrow\infty}\frac{F^{0}(p_{n,j})}{d_{n}}=\infty\Big\} \ne \emptyset.$$
For $j \in J' = \{1,\cdots,\ell\} \backslash J$, there is $A > 0$ such that $F^{0}(p_{n,j})\leq Ad_{n}$. Without loss of generality we assume $J' =\{1,\cdots,s\}$ with $1\leq s\leq \ell-1$. Notice that $s \ge 2$ by the definition \eqref{dn}.

Denote still $\widetilde{u}_{n}(x)=u_{n}(d_{n}x)+N\ln d_{n}$ for $F^{0}(x)\leq4A$. Similar arguments as for {\sl Case 1} show that
\begin{align}\label{l4.3.10}
\lim\limits_{n\rightarrow\infty}\int_{\mathcal{B}_{4Ad_{n}}}V_{n}e^{u_{n}}dx=\lim\limits_{n\rightarrow\infty}\int_{\mathcal{B}_{2Ad_{n}}}V_{n}e^{u_{n}}dx=\sum_{j=1}^s \beta_j.
\end{align}

Set $r_n' =Ad_{n}$ and $x_n'=0$, we see that $(\ell-s+1)$ sequences $\{x_n'\}\cup \{p_{n, j}\}_{s < j \leq \ell}$ with radii $r_n', r_{n, j} (s < j \le \ell)$ and the masses $\beta'=\sum_{1\leq j \leq s} \beta_j$, $\beta_j (s < j \le \ell)$ satisfy
(\ref{l4.2.1}), (\ref{l4.2.4bis})-(\ref{l4.2.5bis}) with $m' := \ell-s+1 < \ell$. We only need to show
\begin{align}\label{l4.3.12}
\mathcal{B}_{Ad_{n}}\cap \mathcal{B}_{r_{n, j}}(p_{n, j})=\emptyset, \quad\forall ~s < j\leq \ell.
\end{align}
Suppose that (\ref{l4.3.12}) does not hold for some $j > s$. Then $Ad_{n}+r_{n, j}\geq F^{0}(p_{n, j})$, it follows that
$$\frac{r_{n, j}}{d_{n}}\geq \frac{F^{0}(p_{n, j})}{d_{n}}-A\rightarrow\infty, \quad\mbox{as } n\rightarrow\infty.$$
We have, for $n$ large enough,
$$Ad_{n}+F^{0}(p_{n, j})\leq 2r_{n, j} \quad \mbox{since } r_{n, j}\geq \frac{2}{3}F^{0}(p_{n, j}).$$
Therefore $\mathcal{B}_{Ad_{n}}\subset \mathcal{B}_{2r_{n, j}}(p_{n, j})$, which is a contradiction with (\ref{l4.2.2}).

Now we can apply the induction hypothesis with $m' < \ell$ to claim $\beta = \sum_{1 \leq j \leq \ell} \beta_j$. Lemma \ref{lem3.2} is established. \qed

\medskip
Proposition \ref{pro3.1} is clearly valid using Lemma \ref{lem3.2} with $\beta_j=\mathcal{C}_N \kappa.$
\end{proof}

The conclusions of Theorem \ref{thm1} follow readily from Proposition \ref{pro3.1}. So we are done.

\bigskip
\noindent{\bf Acknowledgements.}   X. Huang is partially supported by NSFC (No.~12271164). Y. Li is partially supported by NSFC (No.~12401132), and also partially supported by China Postdoctoral Science Foundation (No.~2022M721164). F. Zhou is partially supported by NSFC (No.~12071189). All authors are also supported in part by Science and Technology Commission of Shanghai Municipality (No.~22DZ2229014).

\bigskip

\noindent{\bf Data availability.}  Data sharing not applicable to this article as no datasets were generated or analysed during the current study.


\begin{thebibliography}{CL}

\bibitem{AP1997} J.A. Aguilar Crespo and I. Peral Alonso. Blow-up behavior for solutions of $-\Delta_{N} u=V(x)e^{u}$ in bounded domains in $\mathbb{R}^{N}$, Nonlinear Anal., 29 (1997), no. 4, 365–384.

\bibitem{AT1995} F. Almgren and J.E. Taylor. Flat flow is motion by crystalline curvature for curves with crystalline energies, J. Differential Geom., 42 (1995), no. 1, 1-22.

\bibitem{ATW1993} F. Almgren, J.E. Taylor and L. Wang. Curvature-driven flows: a variational approach, SIAM J. Control Optim., 31 (1993), no. 2, 387-438.

\bibitem{AFTL1997} A. Alvino, V. Ferone, G. Trombetti and P.L. Lions. Convex symmetrization and applications, Ann. Inst. H. Poincar\'e C, Anal. Non Lin\'eaire, 14 (1997), no. 2, 275-293.

\bibitem{BDM2015} D. Bartolucci and F. De Marchis. Supercritical mean field equations on convex domains and the Onsager's statistical description of two-dimensional turbulence, Arch. Ration. Mech. Anal., 217 (2015), no. 2, 525-570.

\bibitem{BLT2011}D. Bartolucci, C.S. Lin and G. Tarantello. Uniqueness and symmetry results for solutions of a mean field equation on $\mathbb{S}^2$ via a new bubbling phenomenon, Comm. Pure Appl. Math., 64 (2011), no. 12, 1677-1730.


\bibitem{BM1991}H. Brezis and F. Merle. Uniform estimates and blow-up behavior for solutions of $-\Delta u=V(x)e^{u}$ in two dimensions, Comm. Partial Differential Equations, 16 (1991), no. 8-9, 1223-1253.

\bibitem{CL} W. Chen and C. Li. Classification of solutions of some nonlinear elliptic equations, Duke Math. J., 63 (1991), no. 3, 615-622.


\bibitem{CN1991} K-S. Cheng and W-M. Ni. On the structure of the conformal gaussian curvature equation on $\mathbb{R}^{2}$, Duke Math. J., 62 (1991), no. 3, 721-737.

\bibitem{CS2009} A. Cianchi and P. Salani. Overdetermined anisotropic elliptic problems, Math. Ann., 345 (2009), no. 4, 859-881.

\bibitem{CL2023} G. Ciraolo and X. Li. Classification of solutions to the anisotropic $N$-Liouville equation in $\mathbb{R}^{N}$, Int. Math. Res. Not.,  (2024), no. 19, 12824-12856.

\bibitem{CFV2014} M. Cozzi, A. Farina and E. Valdinoci. Gradient bounds and rigidity results for singular, degenerate, anisotropic partial differential equations, Comm. Math. Phys., 331 (2014), no. 1, 189-214.


\bibitem{D1983} E. DiBenedetto. $C^{1+\alpha}$ local regularity of weak solutions of degenerate elliptic equations, Nonlinear Anal., 7 (1983), no. 8, 827-850.


\bibitem{E2018} P. Esposito. A classification result for the quasi-linear Liouville equation, Ann. Inst. H. Poincar\'e C Anal. Non Lin\'eaire, 35 (2018), no. 3, 781-801.

\bibitem{EL2021} P. Esposito and M. Lucia. Harnack inequalities and quantization properties for the $n$-Liouville equation, Calc. Var. Partial Differential Equations, 63 (2024), no. 6, Paper No. 159.

\bibitem{EM2015} P. Esposito and F. Morlando. On a quasilinear mean field equation with an exponential nonlinearity, J. Math. Pures Appl., 104 (2015), no. 2, 354-382.

\bibitem{FK2009} V. Ferone and B. Kawohl. Remarks on a Finsler-Laplacian, Proc. Amer. Math. Soc., 137 (2009), no. 1, 247-253.

\bibitem{FM1991} I. Fonseca and S. M\"{u}ller. A uniqueness proof for the Wulff theorem, Proc. Roy. Soc. Edinburgh Sect. A, 119 (1991), no. 1-2, 125-136.

\bibitem{FT2014} R. Fortini and G. Tarantello. The role of Liouville type systems in the study of non-topological Chern-Simons vortices, XVIIth International Congress on Mathematical Physics, 383-390, World Sci. Publ., Hackensack, NJ, 2014.

\bibitem{HKM1993} J. Heinonen, T. Kilpel\"{a}inen and O. Martio. Nonlinear potential theory of degenerate elliptic equations, Oxford University Press, New York, 1993.



\bibitem{KW1974} J.L. Kazdan and F.W. Warner. Curvature functions for compact 2-manifolds, Ann.  Math., 99 (1974), 14-47.

\bibitem{KM1992} T. Kilpel\"{a}inen and J. Mal\'{y}. Degenerate elliptic equations with measure data and nonlinear potentials, Ann. Scuola Norm. Sup. Pisa Cl. Sci., (4) 19 (1992), no. 4, 591-613.

\bibitem{L1999} Y. Li. Harnack type inequality: the method of moving planes, Commun. Math. Phys., 200 (1999), no. 2, 421-444.

\bibitem{LS1994} Y. Li and I. Shafrir. Blow-up analysis for solutions of $-\Delta u=V(x)e^{u}$ in dimension two, Indiana Univ. Math. J., 43 (1994), no. 4, 1255-1270.

\bibitem{L1853} J. Liouville. Sur l'\'{e}quation aux d\'{e}riv\'{e}es partielles $\partial^{2}\log\lambda/\partial u\partial v\pm2\lambda a^{2}$, J. Math. Pures Appl., 18 (1853), no.1, 71-72.

\bibitem{N1982} W-M. Ni. On the elliptic equation $\Delta u+K(x)e^{2u}=0$ and conformal metrics with prescribed Gaussian curvatures, Invent. Math., 66 (1982), no. 2, 343-352.

\bibitem{S1964} J. Serrin. Local behavior of solutions of quasilinear equations, Acta Math., 111 (1964), 247-302.

\bibitem{SS2016} Y. Shen and Z. Shen. Introduction to modern Finsler geometry. Higher Education Press, Beijing; World Scientific Publishing Co., Singapore, 2016.


\bibitem{WX2012} G. Wang and C. Xia. Blow-up analysis of a Finsler-Liouville equation in two dimensions, J. Differential Equations, 252 (2012), no. 2, 1668-1700.

\bibitem{W1901} G. Wulff. Zur Frage der Geschwindigkeit des Wachstums und der Auflsung der Kristallflen, Z. Krist, 34 (1901), 449-530.

\bibitem{XG2016} R. Xie and H. Gong. A priori estimates and blow-up behavior for solutions of $-Q_{N}u=Ve^{u}$ in bounded domain in $\mathbb{R}^N$, Sci. China Math., 59 (2016), no. 3, 479-492.

\end{thebibliography}
\end{document}